\documentclass[onefignum,onetabnum]{siamart171218}



\usepackage{tikz}
\usepackage{lipsum}
\usepackage{amsfonts}
\usepackage{graphicx}
\usepackage{epstopdf}
\usepackage{algorithmic}
\usepackage{subcaption}
\ifpdf
  \DeclareGraphicsExtensions{.eps,.pdf,.png,.jpg}
\else
  \DeclareGraphicsExtensions{.eps}
\fi


\newsiamremark{remark}{Remark}
\newsiamremark{hypothesis}{Hypothesis}
\crefname{hypothesis}{Hypothesis}{Hypotheses}
\newsiamthm{claim}{Claim}

\headers{A Recovery-Based Error Indicator for Finite Difference Methods}{F. Sindy, A. Buffa, and M. Picasso}

\title{A Recovery-Based Error Indicator for Finite Difference Methods\thanks{
\funding{This work has received funding via the Euratom Research and Training Programme (Grant Agreement No. 101052200 — EUROfusion) and from the Swiss State Secretariat for Education, Research and Innovation (SERI). 
Views and opinions expressed are those of the author(s) only and do not necessarily reflect those of the European Union, the European Commission, or SERI. Neither the European Union, the European Commission, nor SERI can be held responsible for them.}}}

\author{Ferhat Sindy\thanks{\'Ecole Polytechnique F\'ed\'erale de Lausanne (EPFL), Institute of Mathematics, CH-1015 Lausanne, Switzerland 
  (\email{ferhat.sindy@epfl.ch}, \email{annalisa.buffa@epfl.ch}, \email{marco.picasso@epfl.ch}).} \and Annalisa Buffa\footnotemark[2]\and Marco Picasso\footnotemark[2]}

\usepackage{amsopn}

\makeatletter
\newcommand*{\addFileDependency}[1]{
  \typeout{(#1)}
  \@addtofilelist{#1}
  \IfFileExists{#1}{}{\typeout{No file #1.}}
}
\makeatother

\newcommand*{\myexternaldocument}[1]{%
    \externaldocument{#1}%
    \addFileDependency{#1.tex}%
    \addFileDependency{#1.aux}%
}

\ifpdf
\hypersetup{
  pdftitle={A Recovery-Based Error Indicator for Finite Difference Methods},
  pdfauthor={F. Sindy, A. Buffa, and M. Picasso}
}
\fi


\myexternaldocument{ex_supplement}


\begin{document}

\maketitle

\begin{abstract}
A novel recovery-based error indicator for high-order Finite Difference Methods, based on post-processing of the Finite Difference values is presented. The values obtained on the Finite Difference grid are interpolated into a suitable polynomial Finite Element space. A recovery-based error indicator, with the polynomial-preserving property of \cite{Naga2004,Zhang2005}, is then applied to estimate the gradient error. The performance and accuracy of the proposed error indicator are demonstrated through several numerical experiments, including the two-dimensional Poisson problem solved using second- and fourth-order finite difference schemes. Additional experiments are conducted on elliptic problems with discontinuous coefficients, as well as on the two and three-dimensional wave equation in homogeneous media with second- and fourth-order finite differences, and in heterogeneous media with second-order finite differences.
\end{abstract}

\begin{keywords}
  Recovery-Based Error Estimator, Wave Equation, High-Order Finite Difference Method
\end{keywords}

\begin{AMS}
  65M06, 35L05
\end{AMS}

\section{Introduction}
\label{intro}
The theory and practice of a posteriori error estimation are well established for the Finite Element Method (FEM), see for instance  \cite{Ainsworth1997}. In contrast, error estimators for high-order Finite Difference Method (FDM) remain scarce. Our final objective is to derive an a posteriori error indicator suitable for (high-order) FDM discretizations of complex multiphysics problems, for instance, the GBS code for plasma physics in a tokamak \cite{Giacomin2022}.\\
Several works have investigated a posteriori error estimation for FDMs, often by reformulating the finite difference scheme as an equivalent FEM. This reformulation enables the use of standard residual-based estimators \cite{Verfurth2013}. In~\cite{Collins2014}, a FEM formulation is developed that is nodally equivalent to the Lax–Wendroff method. Similarly,~\cite{collins2015posteriori} reformulates explicit FDMs as FEMs for two classes of ODE solvers. In~\cite{mao2025adaptive}, partial integro-differential equations are discretized using IMEXs schemes in time, together with a second-order FDM in space, and continuous piecewise linear reconstructions are employed to obtain FEM-like solutions. In~\cite{chaudhry2017posteriori}, a nodally equivalent FEM is derived for an IMEX scheme. These approaches, however, are limited to low-order discretizations. Also, the main drawback of residual-based error estimators is that, for complex multiphysics problems, their derivation is often nontrivial and must be repeated for each new governing equation. In contrast, recovery-based a posteriori error estimators are equation-agnostic. Their approach relies on post-processing the gradient to obtain a recovered gradient. The gradient error is then estimated as the difference between the recovered gradient and the discrete gradient obtained from the numerical solution.\\
We aim to develop an a posteriori error indicator that is equation-agnostic and therefore well-suited for complex multiphysics problems and high-order FDMs. Examples of such applications include \cite{bohlen2002parallel,taflove2005computational,yee1966numerical,basilisk,HESTHAVEN200359}. Specifically, we propose a novel recovery-based indicator that employs a finite element reconstruction of the numerical solution and its gradient, using the Polynomial Preserving Recovery (PPR) method~\cite{Naga2004,Naga2005,Zhang2005} to estimate the gradient error of the reconstructed solution. The proposed approach offers a flexible and generalizable framework for assessing the accuracy of numerical solutions. While the indicator can also be incorporated into adaptive refinement strategies, the primary focus of this work is on error evaluation and solution assessment. The effectiveness of the proposed indicator is demonstrated on the Poisson problem and the wave equation, though the method could be broadly applicable as a black-box tool for a wide range of PDEs. The paper is organized as follows. In Section \ref{sec:gradient_recovery}, the PPR recovery operator and its properties in the context of FEM are recalled to define a recovery-based error indicator. Section \ref{sec:main} introduces the Recovery-based error indicator for FDM through the poisson problem and wave equation by interpolating the FDM solution in a suitable Finite Element space and using the recovery-based error indicator from Section \ref{sec:gradient_recovery}. Numerical results are provided in Section \ref{sec:experiments}, followed by conclusions in Section \ref{sec:conclusions}.

\section{Recovery-Based Error Indicator}
\label{sec:gradient_recovery}
In this section, we consider the PPR recovery-based error indicator. We closely follow the papers \cite{Naga2005,Naga2004,Zhang2005}.
We let $\Omega\subset\mathbb {R}^d$ be an open, $d$-dimensional polygonal domain. Given $h>0$, let $\mathcal{T}_h$ be a partition of $\Omega$ into elements $K$ with diameter less or equal $h$. We define the continuous piecewise polynomial Finite Element space of degree r: 
$$
\mathcal{V}_h^r=\left\{v_h \in C^0(\overline{\Omega}):\left.v_h\right|_K \in \mathbb{P}_r(K), \quad \forall K \in \mathcal{T}_h\right\}.
$$
\noindent Here $K$ is an interval for $d = 1$, a triangle or quadrilateral for $d = 2$, a tetrahedron or cuboid for $d = 3$, and so on (for quadrilaterals and cuboids we have $\left.v_h\right|_K \in \mathbb{Q}_r(K)$ instead of $\left.v_h\right|_K \in \mathbb{P}_r(K)$). Let $N$ be the number of nodes. A basis of $\mathcal{V}_h^r$ is the standard Lagrange basis functions $\{\phi_i\}_{i = 1}^{N}$ with $\phi_i(z_j) = \delta_{ij}$. For any function $u \in C^0(\overline{\Omega})$, let 
$$
I_h^r u(x) = \sum_{i = 1}^{N}u(z_i)\phi_i(x),\quad x \in \overline{\Omega},
$$
be the Lagrange interpolant of $\mathcal{V}_h^r$. Let $u_h\in \mathcal{V}_h^r$ be an approximation of $u$ obtained, for instance by solving a linear elliptic problem with the FEM. The PPR gradient recovery operator $\mathrm{G}_h$ is a linear mapping from $\mathcal{V}_h^r$ to $\left(\mathcal{V}_h^r\right)^d$, mimicking the $\nabla u$, and defined on piecewise polynomials of degree $r$. Thus: 
$$
\mathrm{G}_hu_h(x) = \sum_{i = 1}^{N}(\mathrm{G}_h u_h)(z_i)\phi_i(x),\quad x \in \overline{\Omega}.
$$
I.e., $\mathrm{G}_h u_h$ is fully defined by its values at the nodes: $\{\mathrm{G}_hu_h(z_i)\}_{i = 1}^{N}$. The process of determining these nodal values consists of three steps:
(1) select local patches of elements; (2) construct local polynomials in a discrete least squares sense; and (3) assemble the recovered data into a global expression. Consider a node $z_i$ associated with a vertex of the mesh, then the local patch is denoted by $\mathcal{K}_i$. It is defined such that the number of nodes around the vertex leads to a unique polynomial $p_{z_i}$. Initially, the local patch is defined as the union of elements that share the vertex. If the number of nodes inside a patch is insufficient, we extend the patch by including mesh elements that share common $(d-1)$-simplices. For a boundary vertex, we extend its local patch by including the local patches associated with the vertices that are inside the boundary vertex's local patch. In Figure \ref{fig:triangle_patch}, we have a triangular mesh for the case $d = 2$ and $r = 1$ on the left and $r = 2$ on the right. We have an internal vertex $z_i$ with its corresponding patch $\mathcal{K}_i$, two boundary vertices $z_k$ and $z_j$ with corresponding patches $\mathcal{K}_k$ and $\mathcal{K}_j$, where the patch $\mathcal{K}_j$ is extended outwards for $r = 1$ since the number of nodes around the vertex is not sufficient if we consider only the elements that share the vertex. In Figure \ref{fig:quad_patch}, we have a quadrilateral mesh showing the same type of vertices and patches for $d = 2$ and $r = 1$ and $r = 2$.
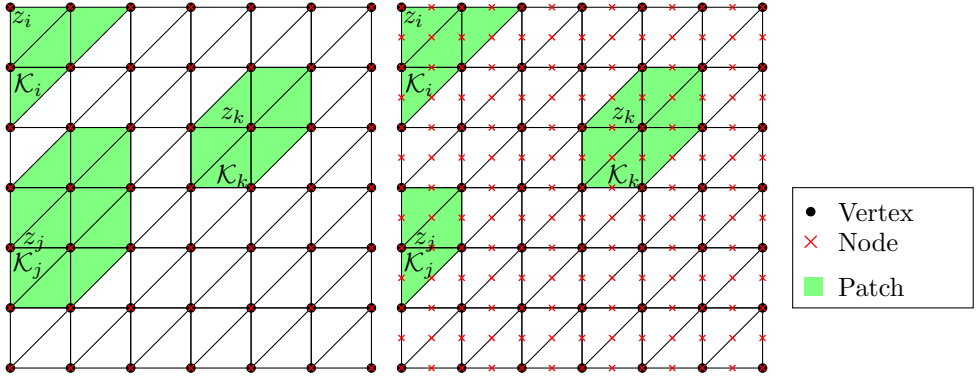
\begin{figure}











\begin{tikzpicture}[scale=0.8]

\begin{scope}[shift={(0,0)}]  
  \fill[green, opacity=0.5] (3,3) -- (4,3) -- (5,4) -- (5,5) -- (4,5) -- (3,4) -- cycle;
  \fill[green, opacity=0.5] (0,6) -- (2,6) -- (0,4) -- cycle;
  \fill[green, opacity=0.5] (0,1) -- (1,1) -- (2,2) -- (2,4) -- (1,4) -- (0,3) -- cycle;

  \foreach \x in {0,...,5} {
      \foreach \y in {0,...,5} {
          \draw[thin] (\x,\y) rectangle (\x+1,\y+1);
          \draw[thin] (\x,\y) -- (\x+1,\y+1);
      }
  }

  \foreach \x in {0,...,6} {
      \foreach \y in {0,...,6} {
          \filldraw[black] (\x,\y) circle (2pt);
      }
  }

  \foreach \x in {0,...,6} {
      \foreach \y in {0,...,6} {
          \draw[red, line width=0.5pt] (\x-0.05,\y-0.05) -- (\x+0.05,\y+0.05);
          \draw[red, line width=0.5pt] (\x-0.05,\y+0.05) -- (\x+0.05,\y-0.05);
      }
  }

  \node at (3.7, 4.2) {$z_k$};
  \node at (3.7, 3.2) {$\mathcal{K}_k$};
  \node at (0.4, 2.1) {$z_j$};
  \node at (0.3, 1.7) {$\mathcal{K}_j$};
  \node at (0.2, 5.8) {$z_i$};
  \node at (0.3, 4.7) {$\mathcal{K}_i$};
\end{scope}

\begin{scope}[shift={(6.5,0)}]  
  \fill[green, opacity=0.5] (3,3) -- (4,3) -- (5,4) -- (5,5) -- (4,5) -- (3,4) -- cycle;
  \fill[green, opacity=0.5] (0,6) -- (2,6) -- (0,4) -- cycle;
  \fill[green, opacity=0.5] (0,1) -- (1,2) -- (1,3) -- (0,3) -- cycle;

  \foreach \x in {0,...,5} {
      \foreach \y in {0,...,5} {
          \draw[thin] (\x,\y) rectangle (\x+1,\y+1);
          \draw[thin] (\x,\y) -- (\x+1,\y+1);
      }
  }

  \foreach \x in {0,...,6} {
      \foreach \y in {0,...,6} {
          \filldraw[black] (\x,\y) circle (2pt);
      }
  }

  \foreach \x in {0,...,12} {
      \foreach \y in {0,...,12} {
          \draw[red, line width=0.5pt] (0.5*\x-0.05,0.5*\y-0.05) -- (0.5*\x+0.05,0.5*\y+0.05);
          \draw[red, line width=0.5pt] (0.5*\x-0.05,0.5*\y+0.05) -- (0.5*\x+0.05,0.5*\y-0.05);
      }
  }

  \node at (3.7, 4.2) {$z_k$};
  \node at (3.7, 3.2) {$\mathcal{K}_k$};
  \node at (0.4, 2.1) {$z_j$};
  \node at (0.3, 1.7) {$\mathcal{K}_j$};
  \node at (0.2, 5.8) {$z_i$};
  \node at (0.3, 4.7) {$\mathcal{K}_i$};
\end{scope}

\begin{scope}[shift={(13,1)}]  
  \draw[black] (0,0) rectangle (3,2); 

  \filldraw[black] (0.3,1.6) circle (2pt);
  \node[anchor=west] at (0.6,1.6) {Vertex};

  \draw[red, line width=0.5pt] (0.2,1.0) -- (0.4,1.2);
  \draw[red, line width=0.5pt] (0.2,1.2) -- (0.4,1.0);
  \node[anchor=west] at (0.6,1.1) {Node};

  \fill[green, opacity=0.5] (0.2,0.2) rectangle (0.5,0.5);
  \node[anchor=west] at (0.6,0.35) {Patch};
\end{scope}

\end{tikzpicture}
    \caption{Triangular mesh. The local patches $\mathcal{K}_i,\; \mathcal{K}_j,$ and $\mathcal{K}_k$ for respectively the boundary vertices $z_i$ and $z_j$ and the internal vertex $z_k$ are shown in green. On the left we have the the case $r = 1$ and $d = 2$ and on the right $r = 2$ and $d = 2$}
    \label{fig:triangle_patch}
\end{figure}
\begin{figure}











\begin{tikzpicture}[scale=0.8]

\begin{scope}[shift={(0,0)}]  
    \draw[green] (0,0) rectangle (2,2);
    \draw[fill=green, fill opacity=0.5] (0,0) rectangle (2,2);
    
    \draw[fill=green, fill opacity=0.5] (0,3) rectangle (2,5);
    
    \draw[fill=green, fill opacity=0.5] (3,2) rectangle (5,4);
    
    \foreach \x in {0,...,6} {
        \draw[thin] (\x,0) -- (\x,6);
    }
    \foreach \y in {0,...,6} {
        \draw[thin] (0,\y) -- (6,\y);
    }
    
    \foreach \x in {0,...,6} {
        \foreach \y in {0,...,6} {
            \filldraw[black] (\x,\y) circle (2pt);
        }
    }
    
    \node at (0.3, 0.2) {$z_i$};
    \node at (0.7, 1.3) {$\mathcal{K}_i$};
    
    \node at (0.2, 4.2) {$z_j$};
    \node at (0.7, 4.3) {$\mathcal{K}_j$};
    
    \node at (4.3, 3.2) {$z_k$};
    \node at (3.7, 2.3) {$\mathcal{K}_k$};
    
    \foreach \x in {0,...,6} {
        \foreach \y in {0,...,6} {
            \draw[red, line width=0.5pt] (\x-0.05,\y-0.05) -- (\x+0.05,\y+0.05);
            \draw[red, line width=0.5pt] (\x-0.05,\y+0.05) -- (\x+0.05,\y-0.05);
        }
    }
\end{scope}

\begin{scope}[shift={(6.5,0)}]  
    \draw[green] (0,0) rectangle (2,2);
    \draw[fill=green, fill opacity=0.5] (0,0) rectangle (2,2);
    
    \draw[fill=green, fill opacity=0.5] (0,3) rectangle (2,5);
    
    \draw[fill=green, fill opacity=0.5] (3,2) rectangle (5,4);
    
    \foreach \x in {0,...,6} {
        \draw[thin] (\x,0) -- (\x,6);
    }
    \foreach \y in {0,...,6} {
        \draw[thin] (0,\y) -- (6,\y);
    }
    
    \foreach \x in {0,...,6} {
        \foreach \y in {0,...,6} {
            \filldraw[black] (\x,\y) circle (2pt);
        }
    }
    
    \foreach \x in {0,...,12} {
        \foreach \y in {0,...,12} {
            \draw[red, line width=0.5pt] (0.5*\x-0.05,0.5*\y-0.05) -- (0.5*\x+0.05,0.5*\y+0.05);
            \draw[red, line width=0.5pt] (0.5*\x-0.05,0.5*\y+0.05) -- (0.5*\x+0.05,0.5*\y-0.05);
        }
    }
    
    \node at (0.3, 0.2) {$z_i$};
    \node at (0.7, 1.3) {$\mathcal{K}_i$};
    
    \node at (0.2, 4.2) {$z_j$};
    \node at (0.7, 4.3) {$\mathcal{K}_j$};
    
    \node at (4.3, 3.2) {$z_k$};
    \node at (3.7, 2.3) {$\mathcal{K}_k$};
\end{scope}

\begin{scope}[shift={(13,1)}]  
  \draw[black] (0,0) rectangle (3,2); 

  \filldraw[black] (0.3,1.6) circle (2pt);
  \node[anchor=west] at (0.6,1.6) {Vertex};

  \draw[red, line width=0.5pt] (0.2,1.0) -- (0.4,1.2);
  \draw[red, line width=0.5pt] (0.2,1.2) -- (0.4,1.0);
  \node[anchor=west] at (0.6,1.1) {Node};

  \fill[green, opacity=0.5] (0.2,0.2) rectangle (0.5,0.5);
  \node[anchor=west] at (0.6,0.35) {Patch};
\end{scope}

\end{tikzpicture}
    \caption{Quadrilateral mesh. The local patches $\mathcal{K}_i,\; \mathcal{K}_j,$ and $\mathcal{K}_k$ for respectively the boundary vertices $z_i$ and $z_j$ and the internal vertex $z_k$ are shown in green. On the left we have the the case $r = 1$ and $d = 2$ and on the right $r = 2$ and $d = 2$}
    \label{fig:quad_patch}
\end{figure}
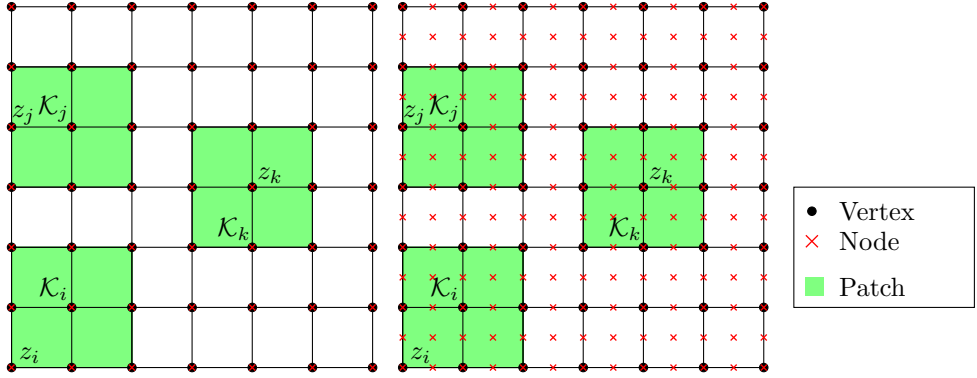
\noindent This concludes the first step. For the second step, we can characterise each node as a vertex node, an edge node, a face node, or an internal node. We first consider a vertex node $z_i$. The local polynomial of order $r+1$, denoted by $p_{z_i}$ is determined by a Discrete Least Squares Polynomial Approximation (DLSPA) over the nodal points that are inside the local patch $\mathcal{K}_i$, where the set of these nodal points is denoted by $\mathcal{N}_{z_i}$. 
\begin{equation}\label{eq:DLSPA}
\sum_{z\in\mathcal{N}_{z_i}}\left|\left(u_h-p_{z_i}\right)(z)\right|^2 \leq \sum_{z\in\mathcal{N}_{z_i}}\left|\left(u_h-p\right)(z)\right|^2, \quad \forall p\in \mathbb{P}_{r+1}(\mathcal{K}_{i}). 
\end{equation}
Again, for a quadrilateral and cuboid mesh, we replace $\mathbb{P}_{r+1}(\mathcal{K}_{i})$ by\\ $\mathbb{Q}_{r+1}(\mathcal{K}_{i})$. Then the recovered gradient $\mathrm{G}_h u_h$ at the vertex node $z_i$ is defined as
$$
(\mathrm{G}_h u_h)(z_i) = \left.\nabla p_{z_i}(x)\right|_{x = z_i}.
$$
If $z_i$ is an edge node, then we take the convex combination of the DLSPAs of the vertices associated with the edge, $z_{i_1}$ and $z_{i_2}$, and evaluate the gradient at the edge node:
$$
\left(\mathrm{G}_h u_h\right)\left(z_i\right)=\beta \left.\nabla p_{z_{i_1}}\left(x\right)\right|_{x = z_i}+(1-\beta) \left.\nabla p_{z_{i_2}}\left(x\right)\right|_{x = z_i},
$$
where $\beta$ is the ratio of the distances $z_i$ to $z_{i_1}$ and $z_{i_2}$. If we have a face node, we proceed analogously, but now the convex combination is taken of the DLSPAs of the vertices of the face. Lastly, if we have an internal node, we take the convex combinations of the DLSPAs of the vertices of the element in which the internal node is contained. This completes the definition of the recovered gradient. We denote the local error indicators $\{\eta_{K}: K\in \mathcal{T}_h\}$ and the global indicators $\eta_h$ defined as:
$$
\eta_{K} := \|\mathrm{G}_{\mathrm{h}} u_h-\nabla u_h\|_{L^2(K)} \text { and } \eta_h := \left(\sum_{K \in \mathcal{T}_h} \eta_{K}^2\right)^{\frac{1}{2}}
$$
It turns out that $\eta_h$ is a good approximation of $\|\nabla (u - u_h)\|_{L^2(\Omega)}$ if $\mathrm{G}_{\mathrm{h}}u_h$ is a better (``super converged") approximation of $\nabla u$ in $\Omega$.
\subsection{Properties}
The recovered gradient $\mathrm{G}_h$ has the two following properties: 
\begin{itemize}
    \item Consistency: 
    The PPR operator $\mathrm{G}_h$ preserves polynomials of degree $r + 1$ \cite[Theorem~2.1]{Zhang2005}, i.e.,
    \begin{equation}\label{eq:ppr}
    \mathrm{G}_h I_h^r p = \nabla p, \quad \forall p \in \mathbb{P}_{r+1}(\Omega).   
    \end{equation}
    Therefore, by the Bramble–Hilbert lemma \cite{bramble1971bounds}, there exists $C > 0$ such that, for all $h > 0$ and $u \in W_{\infty}^{r+2}(\Omega)$,
    \begin{equation}\label{eq:consistency}
    \| \nabla u - \mathrm{G}_h I_h^r u \|_{L^{2}(\Omega)} \le Ch^{r+1}.
    \end{equation}
    \item Boundedness: for all $K \in \mathcal{T}_h$,
    \begin{equation}\label{eq:bdd}
    \left\|\mathrm{G}_h v_h\right\|_{L^2(K)} \leq C \left\|\nabla v_h\right\|_{L^2(\Omega_K)},    
    \end{equation}
    where $\Omega_{K}$ is the union of patches $\mathcal{K}_i$ associated with the nodes $z_i$ in $K$ (see \cite[Section~3]{Naga2004}). 
\end{itemize}
From property \eqref{eq:consistency}, \eqref{eq:ppr} and the supercloseness property, namely that there exists a constant $C > 0$ and $\rho \in (0,1]$ such that, for all $h > 0$,
\begin{equation}\label{eq:supercloseness}
    \|\nabla(I_{h}^{r}u - u_h)\|_{L^2(\Omega)} \leq C h^{r+\rho}.
\end{equation} 
superconvergence follows: There exist a constant $C > 0$ and $\rho \in (0,1]$ such that, for all $h > 0$,
\begin{equation}\label{eq:superconvergence}
\|\nabla u - \mathrm{G}_{h} u_h\|_{L^{2}(\Omega)} \leq C h^{r+\rho}.    
\end{equation}
Indeed
\[
\|\nabla u - \mathrm{G}_{h} u_h\|_{L^{2}(\Omega)} \leq \|\nabla u - \mathrm{G}_{h}I_h^r u\|_{L^{2}(\Omega)} + \|\mathrm{G}_{h}I_h^r u - \mathrm{G}_{h} u_h\|_{L^{2}(\Omega)},
\]
where the first term in the right-hand side is bounded by \eqref{eq:consistency} and the second term is bounded because of \eqref{eq:bdd} and \eqref{eq:superconvergence}. The supercloseness property in \eqref{eq:supercloseness} is derived for a FEM solution $u_h$ of a second-order linear elliptic problem; see, for instance, \cite{wahlbin2006superconvergence}. Nevertheless, our numerical results (presented later) show that the recovered gradient in space and time is indeed superconvergent for the wave equation as well. The superconvergence of the recovered gradient $\mathrm{G}_h u_h$ provides the foundation for establishing the asymptotic exactness of the error indicator $\eta_h$ defined by $\eta_h = \|\nabla u_h - \mathrm{G}_hu_h\|_{L^2(\Omega)}$. Indeed, by the triangle inequality, we have the following two-sided estimate for the error:
\begin{multline*}
-\| \nabla u - \mathrm{G}_h u_h\|_{L^2(\Omega)} + \eta_h \leq \|\nabla(u - u_h)\|_{L^2(\Omega)}
\leq \| \nabla u - \mathrm{G}_h u_h\|_{L^2(\Omega)} + \eta_h.
\end{multline*}
It is clear now that if $\|\nabla u - \mathrm{G}_hu_h\|_{L^2(\Omega)}$ is superconvergent with respect to the true error $\|\nabla (u - u_h)\|_{L^2(\Omega)}$, this estimate suggest that $\eta_h$ and $\|\nabla (u - u_h)\|_{L^2(\Omega)}$ are of the same order. Moreover if we assume that the error converges with known order $r$, that is there exist constants $C_1, C_2 > 0$ such that, for all $h > 0$,  
\begin{equation}\label{eq:lower_bound}
 C_1 h^{r} \leq \|\nabla(u - u_h) \|_{L^2(\Omega)} \leq  C_2 h^{r},
\end{equation}
then using the superconvergence of the recovered gradient, \eqref{eq:superconvergence}, together with \eqref{eq:lower_bound}, we obtain  
\begin{equation}
    \left | \frac{\eta_h}{\|\nabla(u - u_h)\|_{L^2(\Omega)}} - 1 \right | \leq C_3 h^{\rho},
\end{equation}
where $C_3$ is independent of $h$.

\section{Applications for FDM}
\label{sec:main}
In this section, we introduce an error indicator for FDMs. The idea is to interpolate the Finite Difference (FD) solution to a suitable polynomial Finite Element space and thereafter indicate the error of the interpolated FD solution using the recovery-based error indicator introduced in Section \ref{sec:gradient_recovery}. We consider the poisson and the wave equation, but the method could be used for a general problem solved with an FDM.

\subsection{The Poisson Problem}\label{subsection:Poisson_Problem_theory}

We consider the 2D  poisson problem on the unit square $\Omega = (0,1)^2$, with $d = 2$:
\begin{equation}\label{eq:Poisson_equation}
 \begin{alignedat}{2}
-\Delta u(x_1,x_2)&= f(x_1,x_2),&\quad &(x_1,x_2)\in \Omega,\\
u(x_1,x_2) &= g(x_1,x_2),&\quad &(x_1,x_2)\in \partial \Omega,
\end{alignedat}
\end{equation}
where $f$ and $g$ are sufficiently smooth. Given a positive integer $N$, we define the grid spacing as $\frac{1}{N+1}$. We use a uniform Cartesian grid consisting of the FD grid points $\{(x_{1,i},x_{2,j}) = (\frac{i}{N+1},\frac{j}{N+1})\}_{i,j = 0}^{N+1}$. Let $u_{ij}$ be an approximation to $u(x_{1,i}, x_{2,j})$. We replace the Laplacian with a $r+1$-order centered FD formula to discretize equation \eqref{eq:Poisson_equation}. This gives rise to the following linear system:
\begin{equation}\label{eq:1D_Poisson:linsys_2}
A^{(r+1)} \mathbf{u} = \mathbf{f} + \mathbf{b},
\end{equation}
where $\mathbf{u} :=  (u_{11}; \hdots u_{N1}\; u_{12} \; u_{22}\;\hdots\; u_{NN})^T \in \mathbb{R}^{N^2}$, $\mathbf{f} :=  (f_{11}\; \hdots \newline f_{N1}\; f_{12} \; f_{22}\;\hdots\; f_{NN})^T \in \mathbb{R}^{N^2}$, and $A^{(r+1)} \in \mathbb{R}^{N^2\times N^2}$. The vector $\mathbf{b}\in \mathbb{R}^{N^2}$ handles the boundary conditions. If $u$ is sufficiently smooth (i.e., for $r=3$ we require $u \in C^6(\overline{\Omega})$), then there exists a $C>0$ independent of $N$ such that for all $N>0$ (see for instance \cite[Chapter~3]{Leveque2007}):
\begin{equation}\label{eq:1D_Poisson:FDM_estimate}
\max_{1 \leq i,j \leq N}|u_{ij} - u(x_{1,i},x_{2,j})| \leq C \left(\frac{1}{N+1}\right)^{r+1},
\end{equation}
Next, we discuss how to use the recovery-based error indicator of Section \ref{sec:gradient_recovery}. We interpolate the FD solution $u_{i,j}$ at the FD grid points to obtain a function $u_h(x_1,x_2)$ that approximates $u(x)$. We start by assuming that $N+1$ is a multiple of $r$ and define an ``interpolant mesh". We select a subset of FD grid points $\{(x_{1,ir}, x_{2,jr})\}_{i,j = 0}^{\frac{N+1}{r}}$. This subset of points serves as the mesh vertices of the interpolant mesh, which consists of rectangles with width and height $\displaystyle h:= \frac{r}{N+1}$ (that thus depends upon $r$) and forms a partition of $\overline{\Omega}$, 
$$\overline{\Omega} = \bigcup_{i,j = 0}^{\frac{N+1}{r}-1}Q_{ij},$$ 
where $Q_{ij} :=(x_{1, ri}, x_{1, r(i+1)}) \times (x_{2, rj}, x_{2, r(j+1)})$. Given the interpolant mesh, we define the continuous piecewise polynomial space of degree $r$:
\begin{equation*}
\mathcal{V}_{h}^r := \left\{v \in C^0(\Omega) : v|_{Q_{ij}} \in \mathbb{Q}_r(Q_{ij}), \quad i,j = 1, \hdots, \frac{N+1}{r}-1\right\}.    
\end{equation*}
A basis for $\mathcal{V}_h^r$ is the standard tensor-product Lagrange basis $\{\phi_i(x_1)\phi_j(x_2)\}_{i,j = 0}^{N+1}$, where $\phi_{i}(x_{1,l}) = \delta_{i l},\text{ where } 0 \leq i,l \leq N+1$ and $\phi_{j}(x_{2,l}) = \delta_{j l},\text{ where } 0 \leq j,l \leq N+1$. The interpolant mesh nodes coincide with the FD grid points,
\begin{figure}
    \begin{tikzpicture}[scale=1, thick]

\def\nfine{6}     
\def\ncoarse{2}   
\def\rh{3}        

\foreach \i in {0,...,\nfine} {
    \draw[gray, dashed] (\i,0) -- (\i,\nfine);
    \draw[gray, dashed] (0,\i) -- (\nfine,\i);
}

\foreach \i in {0,...,\ncoarse} {
    \pgfmathsetmacro{\x}{\i*\rh}
    \draw[blue, very thick] (\x,0) -- (\x,\nfine);
    \draw[blue, very thick] (0,\x) -- (\nfine,\x);
}

\node[blue] at (1.5,1.5) {\large $Q_{0,0}$};
\node[blue] at (4.5,1.5) {\large $Q_{1,0}$};
\node[blue] at (1.5,4.5) {\large $Q_{0,1}$};
\node[blue] at (4.5,4.5) {\large $Q_{1,1}$};

\draw[->, thick] (-0.25,0) -- (\nfine+0.5,0) node[right] {$x_1$};
\draw[->, thick] (0,-0.25) -- (0,\nfine+0.5) node[above] {$x_2$};

\foreach \i in {0,...,\nfine} {
    \node[below=1pt] at (\i,-0.1) {\scriptsize $x_{1,\i}$};
}

\foreach \j in {0,...,\nfine} {
    \node[left] at (-0.1,\j) {\scriptsize $x_{2,\j}$};
}

\foreach \i in {0,...,\nfine} {
    \node[below=1pt] at (\i,-1) {\tiny $\phi_{\i}$};
    \node[left=1pt] at (-3,\i) {\tiny $\phi_{\i}$};
}

\pgfmathdeclarefunction{phi0}{1}{\pgfmathparse{-9*#1^3/2 + 9*#1^2 - 11/2*#1 + 1}}
\pgfmathdeclarefunction{phi1}{1}{\pgfmathparse{#1*(9*#1^2 - 9*#1 + 2)/2}}
\pgfmathdeclarefunction{phi2}{1}{\pgfmathparse{9*#1*(3*#1^2 - 5*#1 + 2)/2}}
\pgfmathdeclarefunction{phi3}{1}{\pgfmathparse{9*#1*(-3*#1^2 + 4*#1 - 1)/2}}

\begin{scope}[samples=100, variable=\x, blue]
    \foreach \k in {0,...,1} { 
        \draw[domain=0:1, shift={(\k*3,-3)}, thin]
            plot ({\x*3}, {phi0(\x)});
        \draw[domain=0:1, shift={(\k*3,-3)}, thin]
            plot ({\x*3}, {phi1(\x)});
        \draw[domain=0:1, shift={(\k*3,-3)}, thin]
            plot ({\x*3}, {phi2(\x)});
        \draw[domain=0:1, shift={(\k*3,-3)}, thin]
            plot ({\x*3}, {phi3(\x)});
    }
\end{scope}

\begin{scope}[samples=100, variable=\x, blue]
    \foreach \k in {0,...,1} { 
        \draw[domain=0:1, shift={(-1.25, \k*3)}, thin, rotate=90]
            plot ({\x*3}, {phi0(\x)});
        \draw[domain=0:1, shift={(-1.25, \k*3)}, thin, rotate=90]
            plot ({\x*3}, {phi1(\x)});
        \draw[domain=0:1, shift={(-1.25, \k*3)}, thin, rotate=90]
            plot ({\x*3}, {phi2(\x)});
        \draw[domain=0:1, shift={(-1.25, \k*3)}, thin, rotate=90]
            plot ({\x*3}, {phi3(\x)});
    }
\end{scope}

\draw[|-|] (0.05,0.3) -- (0.95,0.3) node[midway, fill=white, inner sep=1pt] {\scalebox{0.5}{$\frac{1}{N+1}$}};
\draw[|-|] (0.1,0.9) -- (2.9,0.9) node[midway, fill=white, inner sep=1pt] {\scriptsize $h = \frac{r}{N+1}$};

\draw[->, thick] (-0.25,-3) -- (\nfine+0.5,-3) node[right] {$x_1$};
\draw[->, thick] (0,-3.25) -- (0,-1.5);

\draw[->, thick] (-1,0) -- (-3,0);
\draw[->, thick] (-1.25,-0.25) -- (-1.25,\nfine+0.5) node[above] {$x_2$};

\end{tikzpicture}
    \caption{Interpolant mesh (blue solid lines) and FD grid (gray dotted lines) for $N = 5$ and $r = 3$ together with the one-dimensional cubic Lagrange basis functions in each spatial direction}
    \label{fig:method:poisson_problem:fd_grid_fe_mesh}
\end{figure}
for example consider $N = 5$ and $r = 3$ in Figure \ref{fig:method:poisson_problem:fd_grid_fe_mesh}. The gray dotted line is the FD grid with grid size $\frac{1}{N+1}$ consisting of the FD grid points $\{(x_{1,i},x_{2,j})\}_{i,j = 0}^{6}$ and the blue solid line is the interpolant mesh consisting of the elements $\{Q_{00},\; Q_{10},\; Q_{01},\; Q_{11} \}$ which are rectangles with width and height $h$. Additionally, the cubic Lagrange basis functions are plotted for each spatial direction. We observe that associated with each tensor product cubic Lagrange basis function, we have an FD grid point $(x_{1,i}, x_{2,j})$ and therefore an FD value $u_{ij}$. In general, this allows us to write the interpolated FD solution as:
\begin{equation*}
u_h(x_1,x_2) = \sum_{i = 0}^{N+1} \sum_{j = 0}^{N+1} u_{ij}\phi_{i} (x_1)\phi_{j}(x_2),\quad (x_1,x_2)\in\overline{\Omega}.
\end{equation*}
The following proposition states the a-priori error estimates for the interpolated FD solution $u_h$.
\begin{proposition}\label{thm:convergence_bounds}
Let $u$ be the (sufficiently smooth) solution of problem \eqref{eq:Poisson_equation}.
Suppose that $u_{ij}$, for $i,j = 0, \hdots, N+1$ is a finite difference solution of order $(r+1)$, i.e.,
\[
\max_{1 \leq i,j \leq N} \big|u_{ij} - u(x_{1,i},x_{2,j})\big| 
   \leq C_1 \left(\frac{1}{N+1}\right)^{r+1}.
\]
Let $u_h$ denote the Lagrange interpolant of order $r$ defined on the interpolant mesh with meshsize 
$h = \tfrac{r}{N+1}$.
Then there exists a constant $C>0$, independent of $h$, such that
\begin{align}\label{eq:conv_bound}
    \left\|\nabla(u-u_h)\right\|_{L^2(\Omega)} \leq C h^{r}.
\end{align}
\end{proposition}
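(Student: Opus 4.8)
The plan is to compare $u_h$ not with $u$ directly but with the nodal (Lagrange) interpolant of the \emph{exact} solution, $I_h^r u \in \mathcal{V}_h^r$, and split the error by the triangle inequality,
\[
\|\nabla(u - u_h)\|_{L^2(\Omega)} \le \|\nabla(u - I_h^r u)\|_{L^2(\Omega)} + \|\nabla(I_h^r u - u_h)\|_{L^2(\Omega)}.
\]
The first term is a pure interpolation error. Since $u$ is smooth and the interpolant mesh is uniform with meshsize $h$, I would bound it by the standard $\mathbb{Q}_r$ interpolation estimate $\|\nabla(u - I_h^r u)\|_{L^2(\Omega)} \le C h^{r}\|u\|_{H^{r+1}(\Omega)}$, obtained elementwise from the Bramble--Hilbert lemma (already invoked in Section~\ref{sec:gradient_recovery}) together with a scaling argument. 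This contributes the claimed rate $O(h^{r})$ and requires no new ideas.

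The second term is where the finite difference accuracy enters. The function $e_h := I_h^r u - u_h$ lies in $\mathcal{V}_h^r$, and since both interpolants share the same nodes (the FD grid points), its nodal values are exactly the pointwise FD errors $u(z_{ij}) - u_{ij}$. At the boundary nodes these vanish, because the Dirichlet data is imposed exactly there; at the interior nodes the hypothesis gives $|e_h(z_{ij})| \le C_1 (N+1)^{-(r+1)} = C_1 (h/r)^{r+1} =: \delta$. Hence $e_h$ is a finite element function whose nodal values are all bounded by $\delta$, which is one power of $h$ \emph{better} than the target rate.

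To turn this nodal bound into a gradient bound I would pass to the reference element $\hat Q = (0,1)^2$, on which all norms of the finite-dimensional space $\mathbb{Q}_r$ are equivalent with constants depending only on $r$. Specifically, the $L^\infty$ norm is controlled by the maximum of the nodal values (the Lebesgue constant $\Lambda_r$), and an inverse inequality bounds $\|\hat\nabla \hat p\|_{L^2(\hat Q)}$ by $\|\hat p\|_{L^\infty(\hat Q)}$. Mapping back to a physical element $Q$ of side $h$ through the affine change of variables, the two-dimensional scaling leaves the gradient $L^2$ norm invariant, so $\|\nabla e_h\|_{L^2(Q)} \le C_r \Lambda_r \delta$ on each element. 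Summing the squares over the $h^{-2}$ elements of the mesh yields $\|\nabla e_h\|_{L^2(\Omega)}^2 \le h^{-2}(C_r\Lambda_r\delta)^2 \le C\,h^{-2}h^{2(r+1)} = C h^{2r}$, i.e. $\|\nabla e_h\|_{L^2(\Omega)} \le C h^{r}$; combining the two terms gives \eqref{eq:conv_bound}.

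The main obstacle, and really the crux of the argument, is the bookkeeping of the powers of $h$ in this last step. On each element the gradient of $e_h$ is controlled by its nodal values alone, so $\|\nabla e_h\|_{L^2(Q)} = O(\delta) = O(h^{r+1})$: the factor $h^{-1}$ produced by the inverse inequality is exactly cancelled by the $|Q|^{1/2}=h$ arising when passing from the $L^\infty$ to the $L^2$ norm in two dimensions. Summing over the $O(h^{-2})$ mesh elements then multiplies the squared norm by $h^{-2}$, i.e. the norm itself by $h^{-1}$. The estimate closes precisely because the FD scheme is \emph{superaccurate at the nodes}, delivering $\delta = O(h^{r+1})$ rather than $O(h^{r})$; this single extra order absorbs the $h^{-1}$ loss from element counting and leaves exactly the rate $h^{r}$. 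One must also verify that the equivalence constants ($\Lambda_r$ and the inverse-inequality constant) are independent of $h$, which holds here because the mesh is uniform and $r$ is fixed, so every $Q$ is an exact dilation of the single reference element $\hat Q$.
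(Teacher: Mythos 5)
Your proof is correct and follows essentially the same route as the paper: split the error via the nodal interpolant $I_h^r u$, bound $\|\nabla(u-I_h^r u)\|_{L^2(\Omega)}$ by the standard interpolation estimate, and convert the $O(h^{r+1})$ nodal FD error of $I_h^r u - u_h$ into an $O(h^r)$ gradient bound by an inverse-type estimate that costs one power of $h$. The only difference is cosmetic: you carry out the inverse estimate elementwise on the reference element with an explicit Lebesgue-constant argument, whereas the paper applies a global inverse inequality to $\|I_h^r u - u_h\|_{L^2(\Omega)}$; your version is in fact slightly more careful about why the max nodal error controls the $L^2$ norm.
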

\begin{proof}
Let $I_{h}^{r}u$ be the $r$-th order Lagrange interpolant of $u$ on the interpolant mesh introduced above. We decompose the error as follows:  
\begin{equation*}
    \left\|\nabla(u-u_h)\right\|_{L^2(\Omega)} 
    \leq \left\|\nabla(u-I_{h}^{r} u)\right\|_{L^2(\Omega)}
    + \left\|\nabla(I_{h}^{r} u-u_h)\right\|_{L^2(\Omega)}.
\end{equation*}
The first term on the right-hand side is a classical interpolation estimate: there exists a constant $C_1>0$, independent of $h$, such that  
\[
\left\|\nabla(u-I_{h}^{r} u)\right\|_{L^2(\Omega)}
    \leq C_1 h^{r}\,|u|_{H^{r+1}(\Omega)},
\]
see, for example, \cite[Chap.~3.1]{ciarlet2002finite}. For the second term, we first apply the inverse inequality: there exists a constant $C_1>0$, independent of $h$, such that  
\[
\left\|\nabla(I_{h}^{r} u-u_h)\right\|_{L^2(\Omega)} \leq C_2 h^{-1}\left\|I_{h}^{r} u-u_h\right\|_{L^2(\Omega)}.
\]
Since  
\[
\left\|I_{h}^{r} u-u_h\right\|_{L^2(\Omega)} \leq C \max_{1 \leq i,j \leq N}\big|u_{ij} - u(x_{1,i},x_{2,j})\big|\left\|\sum_{i = 0}^{N+1} \sum_{j = 0}^{N+1} \phi_{i} (x_1)\phi_{j}(x_2)\right\|_{L^2(\Omega)},
\]
\noindent then, because of \eqref{eq:1D_Poisson:FDM_estimate}:
\[
\left\|\nabla(I_{h}^{r} u-u_h)\right\|_{L^2(\Omega)} \leq C_2 h^{-1}C\left(\frac{1}{N+1}\right)^{r+1}\sqrt{\Omega} \leq C_3 h^r, 
\]
where $C_3>0$ is a constants independent of $h$. Combining these two estimates yields the desired bound \eqref{eq:conv_bound}, which completes the proof.
\end{proof}
\noindent We estimate the error $\|\nabla (u - u_h)\|_{L^2(\Omega)}$ using the recovery based error indicator introduced in Section \ref{sec:gradient_recovery}, namely:
\begin{equation}\label{eq:indicator_poisson_global}
\eta_{h} := \left(\sum_{i = 0}^{\frac{N+1}{r}-1}\sum_{j = 0}^{\frac{N+1}{r}-1} \eta_{Q_{ij}}^2\right)^{\frac{1}{2}},
\end{equation}
where the local error indicators, $\{ \eta_{Q_{ij}} \}_{i,j = 0}^{\frac{N+1}{r}-1}$, are defined as
\begin{equation}\label{eq:indicator_poisson_local}
\eta_{Q_{ij}} := \|\mathrm{G}_{h}u_h - \nabla u_h\|_{L^2(Q_{ij})},
\end{equation}
where $\mathrm{G}_{h}: \mathcal{V}_{h}^{r} \rightarrow \mathcal{V}_{h}^{r}\times \mathcal{V}_{h}^{r}$ is the PPR operator on the interpolant mesh and $\mathrm{G}_{h}u_h$ is the recovered gradient with components $\mathrm{G}_{h} u_h := \begin{pmatrix}\mathrm{G}_{h}^{x_1} u_h &\mathrm{G}_{h}^{x_2} u_h\end{pmatrix}^T$. At this point we would like to stress that in Section \ref{sec:gradient_recovery}, $u_h$ is the solution of a linear elliptic problem solved with the FEM, whereas in this section $u_h$ is not. Nevertheless, the numerical results in Subsection \ref{subsection:Poisson_Problem} indicate that the estimator still performs effectively.

\subsection{The Wave Equation}
We define $\Omega=(0,1)^2\times(0,T)$, with $d = 3$. We consider the 2D wave equation with constant wave speed on the unit square: 
\begin{equation}\label{eq:Wave_1D:problem}
\begin{alignedat}{2}
&\frac{\partial^2 u}{\partial t^2}(x_1,x_2,t) - \Delta u(x_1,x_2,t) = 0, 
&\quad &(x_1,x_2,t) \in \Omega, \\
&u(x_1,x_2,t) = 0, 
&\quad &(x_1,x_2,t) \in \partial\big((0,1)^2\big)\times [0,T], \\
&u(x_1,x_2,0) = f(x_1,x_2), 
&\quad &(x_1,x_2) \in (0,1)^2, \\
&\frac{\partial u}{\partial t}(x_1, x_2, 0) = g(x_1,x_2), 
&\quad &(x_1,x_2) \in (0,1)^2.
\end{alignedat}
\end{equation}
where $f,g$ are sufficiently smooth. The method described in this subsection can be easily extended to higher spatial dimensions and varying coefficients. Given positive integers $N_x$ and $N_t$, we define the grid spacing $\frac{1}{N_x+1}$ and time stepsize $\frac{T}{N_t+1}$. We have a uniform cartesian grid in space and time consisting of FD grid points $\{(x_{1,i}, x_{2,j}, t^n)\}_{i,j = 0}^{N_x+1}{}_{n = 0}^{N_t+1}$. We solve problem \eqref{eq:Wave_1D:problem} with a $r+1$ order FDM to obtain a FD solution $u_{ij}^n$ approximating $u(x_{1,i}, x_{2,j}, t^n)$, where $0 \leq i,j \leq N_x+1$ and $0 \leq n \leq N_t + 1$. If $u$ is sufficiently smooth then there exists an $C_1,\; C_2>0$ independent of $N_x$ and $N_t$ such that: 
\begin{equation}\label{eq:Wave:FDM_estimate}
    \max_{0 \leq i,j \leq N_x+1}|u_{ij}^{n} - u(x_{1,i},x_{2,j},t^{n}) | \leq C_1\left(\frac{1}{N_x+1}\right)^{r+1} + C_2\left(\frac{T}{N_t+1}\right)^{r+1},
\end{equation}
where $0 \leq n \leq N_t+1$. With the same line of reasoning as for the poisson problem, we assume that $N_x+1$ and $N_t+1$ are multiples of $r$ and construct an ``interpolant mesh" in space and time. We select a subset of FD grid points $\{(x_{1,ir}, x_{2,jr}, t^{nr})\}_{i,j = 0}^{\frac{N_x+1}{r}}{}_{n = 0}^{\frac{N_t+1}{r}}$. This subset of points serve as the mesh vertices of the interpolant mesh which consists of cuboids with sides $\frac{r}{N_x+1}$, $\frac{r}{N_x+1}$ and $\frac{rT}{N_t+1}$ and forms a partition of $\Omega$, i.e. 
$$\bigcup_{i,j=0}^{\frac{N_x+1}{r}-1}\!\bigcup_{n=0}^{\frac{N_t+1}{r}-1} Q_{ij}^n$$
where 
$$Q_{ij}^n:=(x_{1, ri}, x_{1, r(i+1)}) \times (x_{2, rj}, x_{2, r(j+1)})\times(t^{rn},t^{r(n+1)}).$$
We define the mesh parameter of the interpolant mesh to be 
$$h:= \max{\left\{\frac{r}{N_x+1}, \frac{rT}{N_t+1}\right\}}.$$ 
Given the interpolant mesh, we define the continuous piecewise polynomial space of degree $r$:
\begin{multline*}
    \mathcal{V}_{h}^r := \left\{v \in C^0(\Omega) : v|_{Q_{ij}^n} \in \mathbb{Q}_r(Q_{ij}^n), \quad i,j = 0,\hdots, \frac{N_x+1}{r}-1,  \right. \\ \left. n = 0,\hdots,\frac{N_t+1}{r}-1\right\}.
\end{multline*}
As mentioned in Subsection \ref{subsection:Poisson_Problem_theory}, a basis for the space $\mathcal{V}_{h}^r$ is the standard tensor-product Lagrange basis $\{\phi_i(x_1)\phi_j(x_2)\phi^n(t)\}_{i,j = 0}^{N_x+1}{}_{n = 0}^{N_t+1}$. The mesh nodes associated with the interpolant mesh coincide with the FD grid points, allowing us to define:
\begin{equation*}
u_{h}(x_1,x_2,t) = \sum_{i = 0}^{N_x+1} \sum_{j = 0}^{N_x+1} \sum_{n = 0}^{N_t+1} u_{ij}^n\phi_{i} (x_1)\phi_{j}(x_2)\phi^{n} (t),\quad (x_1,x_2,t)\in\Omega.
\end{equation*}
\noindent The following proposition states the a-priori error estimates for the interpolated FD solution $u_h$, analogously to Proposition \ref{thm:convergence_bounds} for the poisson problem.
\begin{proposition}\label{thm:convergence_bounds_wave}
Let $u$ be the solution of problem \eqref{eq:Wave_1D:problem}, with 
$u$ sufficiently smooth. 
Suppose that $u_{ij}^n$, for $i,j = 0, \hdots, N_x+1$ and $n = 0, \hdots, N_t+1$ is a finite difference solution of order $(r+1)$, i.e.,
\[
\max_{0 \leq i,j \leq N_x+1} 
\big|u_{ij}^{n} - u(x_{1,i},x_{2,j},t^{n}) \big|
   \leq C_1 \left(\tfrac{1}{N_x+1}\right)^{r+1}
     + C_2 \left(\tfrac{T}{N_t+1}\right)^{r+1}.
\]
Let $u_h$ denote the $r$-th order Lagrange interpolant of $u_{ij}^n$, where $i,j = 0, \hdots, N_x+1$ and $n = 0, \hdots, N_t+1$
defined on a interpolant space time mesh with meshsize $h = \max\left\{\tfrac{r}{N_x+1}, \tfrac{rT}{N_t+1}\right\}$. 
Then there exists a constant $C>0$, independent of $h$, such that
\begin{align}\label{eq:a_priori_wave}
    |||e_h|||
    := \left(
        \|\nabla e_h\|_{L^2(\Omega)}^2 
        + \Big\|\tfrac{\partial e_h}{\partial t}\Big\|_{L^2(\Omega)}^2
    \right)^{\tfrac{1}{2}}
    \;\leq\; C h^{r},
\end{align}
where $e_h := u - u_h$.
\end{proposition}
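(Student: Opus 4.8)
The plan is to mimic the proof of Proposition~\ref{thm:convergence_bounds}, the only genuinely new feature being the anisotropy of the space-time interpolant mesh. First I would observe that, since $\nabla$ denotes the spatial gradient and the temporal derivative is added separately, the triple-bar norm coincides with the full first-order Sobolev seminorm on the space-time cylinder, $|||e_h||| = |e_h|_{H^1(\Omega)}$ with $\Omega = (0,1)^2\times(0,T)$. Introducing $I_h^r u \in \mathcal{V}_h^r$, the $r$-th order tensor-product Lagrange interpolant of the exact solution $u$ on the interpolant mesh, I would split the error by the triangle inequality as
\[
|||e_h||| \;\leq\; |u - I_h^r u|_{H^1(\Omega)} \;+\; |I_h^r u - u_h|_{H^1(\Omega)},
\]
and estimate the two terms separately, exactly as in the Poisson case.

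For the first term I would invoke the classical tensor-product interpolation estimate of \cite[Chap.~3.1]{ciarlet2002finite}: since $u$ is sufficiently smooth, $|u - I_h^r u|_{H^1(\Omega)} \leq C h^r |u|_{H^{r+1}(\Omega)}$, which already has the claimed order (bounding each directional meshsize by $h$, this step is insensitive to the anisotropy). For the second term, both $I_h^r u$ and $u_h$ lie in $\mathcal{V}_h^r$ and share the same tensor-product Lagrange basis, so their difference is the finite element function whose nodal coefficients are exactly the pointwise finite-difference errors $u(x_{1,i},x_{2,j},t^n) - u_{ij}^n$. Applying the inverse inequality to pass from the $H^1$ seminorm to the $L^2$ norm, then bounding the $L^2$ norm of the nodal difference by the maximal nodal error times $\big\|\sum_{i,j,n}\phi_i\phi_j\phi^n\big\|_{L^2(\Omega)} = \sqrt{|\Omega|} = \sqrt{T}$ (using that the Lagrange basis is a partition of unity), I obtain
\[
|I_h^r u - u_h|_{H^1(\Omega)} \;\leq\; C\,h^{-1}\max_{i,j,n}\big|u(x_{1,i},x_{2,j},t^n) - u_{ij}^n\big|\,\sqrt{T}.
\]
Inserting the finite-difference bound \eqref{eq:Wave:FDM_estimate} and using $\tfrac{1}{N_x+1} = \tfrac{h_s}{r} \leq \tfrac{h}{r}$ and $\tfrac{T}{N_t+1} = \tfrac{h_t}{r} \leq \tfrac{h}{r}$, where $h_s$ and $h_t$ denote the spatial and temporal element sizes, the right-hand side is bounded by $C h^{-1}\cdot C' h^{r+1} = C'' h^r$. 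Summing the two contributions yields \eqref{eq:a_priori_wave}.

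The main obstacle is precisely the step that is trivial in the Poisson case: the interpolant mesh here is anisotropic, with spatial size $h_s = \tfrac{r}{N_x+1}$ and temporal size $h_t = \tfrac{rT}{N_t+1}$ that need not coincide, whereas the inverse inequality in the form $|w|_{H^1(K)} \leq C h^{-1}\|w\|_{L^2(K)}$ with $h = \max\{h_s,h_t\}$ is only valid when the element aspect ratio is bounded; a genuinely anisotropic inverse estimate produces instead the \emph{smallest} element dimension. I would therefore run the argument under the standing assumption that the space and time meshsizes are comparable, $h_s \sim h_t \sim h$ (as holds in the experiments and is natural under a CFL-type relation), so that the inverse-inequality constant is uniform in $h$. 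An alternative that exposes the dependence explicitly is to apply the one-dimensional inverse inequality in each coordinate direction separately—its constant depending only on $r$—bounding $\|\partial_{x_k} w\|_{L^2}$ by $C h_s^{-1}\|w\|_{L^2}$ and $\|\partial_t w\|_{L^2}$ by $C h_t^{-1}\|w\|_{L^2}$; the resulting cross terms $h_s^{-1}h_t^{r+1}$ and $h_t^{-1}h_s^{r+1}$ are then seen to be of order $h^r$ exactly under bounded aspect ratio. In either case the remaining manipulations are routine and identical in spirit to Proposition~\ref{thm:convergence_bounds}.
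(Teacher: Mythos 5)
Your proposal is correct and follows exactly the route the paper intends: the paper's own proof of Proposition~\ref{thm:convergence_bounds_wave} is the single line ``analogous to Proposition~\ref{thm:convergence_bounds},'' and your splitting via $I_h^r u$, the interpolation estimate, and the inverse inequality applied to the nodal finite-difference errors is precisely that analogy carried out. Your additional observation that the space-time elements are anisotropic, so that the inverse inequality with $h=\max\{h_s,h_t\}$ requires a bounded aspect ratio (or directional inverse estimates), is a genuine point the paper silently elides, and your resolution of it is sound.
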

\begin{proof}
    The proof is analogous to the proof of Proposition \ref{thm:convergence_bounds}.
\end{proof}
\noindent The a-priori error estimate is stated in a norm that is not natural for the wave equation. 
In the literature \cite{picasso2010numerical,gorynina2019easily}, similar non-natural norms are also considered: 
for example, only the term $\|\nabla e_h(T)\|_{L^2((0,1)^2)}$ is used, since this is the natural norm for parabolic problems 
and corresponding error estimators are available. 
In analogy with the  poisson problem, we introduce the recovery-based error indicator from Section \ref{sec:gradient_recovery}, 
which allows us to measure the error $|||e_h|||^2$. We split the indicator into spatial and temporal contributions:
\begin{multline}
    \eta_{h}^2 = \left({\eta_{h}^x}\right)^2 + \left({\eta_{h}^t}\right)^2\\
    := \sum_{i = 0}^{\frac{N_x+1}{r}-1}\sum_{j = 0}^{\frac{N_x+1}{r}-1}\sum_{n = 0}^{\frac{N_t+1}{r}-1}{\eta_{Q_{ij}^n}^x}^2 
    + \sum_{i = 0}^{\frac{N_x+1}{r}-1}\sum_{j = 0}^{\frac{N_x+1}{r}-1}\sum_{n = 0}^{\frac{N_t+1}{r}-1}{\eta_{Q_{ij}^n}^t}^2,
\end{multline}
where $\{ \eta_{Q_{ij}^n}^x \}_{i,j = 0}^{\frac{N_x+1}{r}-1}{}_{n = 0}^{\frac{N_t+1}{r} - 1}$ are the local error indicators 
for the spatial gradient of the error, and 
$\{ \eta_{Q_{ij}^n}^t \}_{i,j = 0}^{\frac{N_x+1}{r}-1}{}_{n = 0}^{\frac{N_t+1}{r} - 1}$ are the local error indicators 
for the temporal derivative of the error, defined as
\begin{align}\label{eq:indicator_wave_local}
    \eta_{Q_{ij}^x} := \|\mathrm{G}_{h}^x u_h - \nabla u_h\|_{L^2(Q_{ij}^n)}, 
    \quad
    \eta_{Q_{ij}^t} := \|\mathrm{G}_{h}^t u_h - \partial_t u_h\|_{L^2(Q_{ij}^n)}.
\end{align}
\noindent Since $\Omega = (0,1)^2\times (0,T)$ the norm in \eqref{eq:a_priori_wave} is global in time; therefore, the indicator \eqref{eq:indicator_wave_local} will not provide any local information about the temporal error distribution.
The natural norm of the wave equation, on the other hand, is defined at a given time, typically at the final time $T$ (or a given time $T$):
$$
    |||e_{h}(x_1,x_2,T)|||^2 := \left\| \frac{\partial e_{h}}{\partial t}(x_1,x_2,T) \right\|^2_{L^2((0,1)^2)} 
    + \left\|\nabla e_{h}(x_1,x_2,T) \right\|^2_{L^2((0,1)^2)}.
$$
However, the recovery-based indicator introduced in Section \ref{sec:gradient_recovery} is defined locally in space-time that is, on an element or time slab and not for a single time instant $t \in [0,T]$.
To relate the indicator to the natural energy norm, we therefore regularize the latter by taking the Root Mean Square (RMS) over the final time slab.
We denote the RMS of a function $f$ by an overline for ease of notation:
$$
    \overline{f} = \left(\frac{1}{r\Delta t}\int_{T-r\Delta t}^T f(\tau)^2 \, d\tau\right)^{\frac{1}{2}},
$$
where, $\Delta t = \frac{T}{N_t+1}$. So the RMS natural norm is given by
\begin{equation}\label{eq:natural_norm_wave}
    \overline{|||e_{h}|||}^2 :=  \overline{\left\| \frac{\partial e_{h}}{\partial t} \right\|}^2_{L^2((0,1)^2)} 
    + \overline{\left\|\nabla e_{h} \right\|}^2_{L^2((0,1)^2)}.
\end{equation}
Moreover, we introduce an auxiliary partition of the spatial domain with elements, $Q_{ij}$:
$$
    (0,1)^2 = \bigcup_{i,j = 0}^{\frac{N_x+1}{r}-1} Q_{ij}, 
    \qquad Q_{ij}:=(x_{1, ri}, x_{1, r(i+1)}) \times (x_{2, rj}, x_{2, r(j+1)}).
$$
The corresponding RMS error indicator is
\begin{equation}\label{eq:wave_indicator_global}
    \overline{\eta_{h}}^2 := \overline{\eta_{h}^t}^2 + \overline{\eta_{h}^x}^2 
    := \sum_{i=0}^{\frac{N_x+1}{r}-1} \sum_{j = 0}^{\frac{N_x+1}{r}-1} \overline{\eta^t_{Q_{ij}}}^2 
    + \sum_{i=0}^{\frac{N_x+1}{r}-1} \sum_{j = 0}^{\frac{N_x+1}{r}-1} \overline{\eta^x_{Q_{ij}}}^2.
\end{equation}
Here, $\{ \overline{\eta_{Q_{ij}}^x} \}_{i,j = 0}^{\tfrac{N_x+1}{r}}$ are the local error indicators for the RMS of the spatial gradient of the error, and $\{\overline{\eta_{Q_{ij}}^t}\}_{i,j = 0}^{\tfrac{N_x+1}{r}}$ are the local error indicators for the RMS of the temporal derivative of the error, defined as
\begin{equation}\label{eq:local_error_indicator_wave}
\begin{aligned}
    \overline{\eta^x_{Q_{ij}}}^2 
    &:= \frac{1}{r\Delta t} \int_{T- r\Delta t}^T \|\mathrm{G}^x_{h}u_{h}(x_1,x_2,\tau) - \nabla u_{h}(x_1,x_2,\tau)\|_{L^2(Q_{ij})}^2 \, d\tau,\\
    \overline{\eta^t_{Q_{ij}}}^2 
    &:= \frac{1}{r\Delta t} \int_{T- r\Delta t}^T \left\|\mathrm{G}^t_{h}u_{h}(x_1,x_2,\tau) - \partial_\tau u_{h}(x_1,x_2,\tau)\right\|_{L^2(Q_{ij})}^2 \, d\tau.
\end{aligned}
\end{equation}
We will demonstrate the accuracy of this indicator in the next section.
\section{Numerical Experiments}
\label{sec:experiments}
In this section, we show the results for several numerical experiments. We first consider the poisson problem and then the wave equation. We aim to evaluate the performance of the error indicator in several test cases. We consider a uniform refinement in space and time (if applicable). Moreover, we introduce the effectivity index (e.i.), which is the ratio between the indicator and the error: 

\[
\text{e.i.} = \frac{\text{indicator}}{\text{error}}.
\]
\subsection{The Poisson Problem}\label{subsection:Poisson_Problem}

This section considers the  poisson problem on the unit square ($\Omega = (0,1)^2,\;d = 2$); see equation \eqref{eq:Poisson_equation}. We consider three numerical examples: an oscillatory solution, a solution with steep gradients, and a solution with a discontinuous gradient. We solve the first two examples with a second—and fourth-order FDM, i.e., the cases $r = 1, 3$, and the last example for $r = 1$. For $r = 1$, we consider a classical standard second-order FD scheme, and for the case $r = 3$, we consider the following difference formula to discretize the Laplacian:
\begin{multline*}
     - \Delta u (x_{1,i}, x_{2,j}) \approx \frac{u_{(i-2)j}-16u_{(i-1)j} + 30u_{ij} - 16u_{(i+1)j} + u_{(i+2)j}}{\left(\frac{1}{N+1}\right)^2}\\ + \frac{u_{i(j-2)}-16u_{i(j-1)} + 30u_{ij} - 16u_{i(j+1)} + u_{i(j+2)}}{\left(\frac{1}{N+1}\right)^2},\quad 1 < i,j < N.        
\end{multline*}
At the boundary, we use one-sided FD formulas. We approximate the error \\ $\|\nabla (u - u_h)\|_{L^2(\Omega)}$ using the indicator, $\eta_h$, defined in equation \eqref{eq:indicator_poisson_global} globally and equation \eqref{eq:indicator_poisson_local} locally. For the first two numerical examples, we know the analytical solution, and in the last numerical experiment, we have a reference solution on a fine mesh. 

\paragraph{Numerical example 1}

The exact solution is given by
$$
    u(x_1,x_2) = \sin{8\pi x_1}\sin{8\pi x_2}.
$$
The solution is smooth and exhibits oscillations in both spatial directions. In Figure \ref{fig:Poisson:oscillations}, we plot the error in the natural norm, alongside the indicator and the error of the recovered gradient for $r = 1$ and $3$. We observe that for an order \(r+1\) FD solution, the gradient of the error converges with order \(r\), which is consistent with the expectations from Proposition \ref{thm:convergence_bounds}. Moreover, the effectivity index converges to $1$, so the indicator is asymptotically exact. This is because the recovered gradient is superconvergent with an order of \(r+1\). In Figures \ref{fig:Poisson_const_coeff:oscillations:loc_err_k_2} and \ref{fig:Poisson_const_coeff:oscillations:loc_err_k_4}, the gradient of the error and the indicator are shown locally for \(r = 1\) and \(r = 3\), respectively. We observe that, locally, the features of the error are captured accurately by the error indicator. However, for \(r = 3\), we notice that the boundary features of the error are captured less accurately by the indicator.

\begin{figure}
  \begin{subfigure}[b]{0.49\textwidth}
    \includegraphics[width=\linewidth]{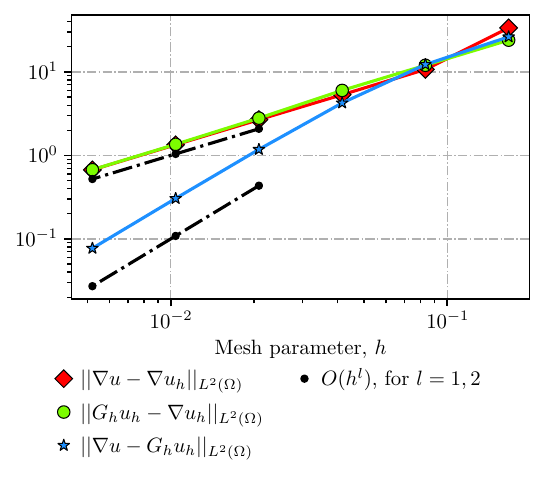}
  \end{subfigure}
  \hfill
  \begin{subfigure}[b]{0.49\textwidth}
    \includegraphics[width=\textwidth]{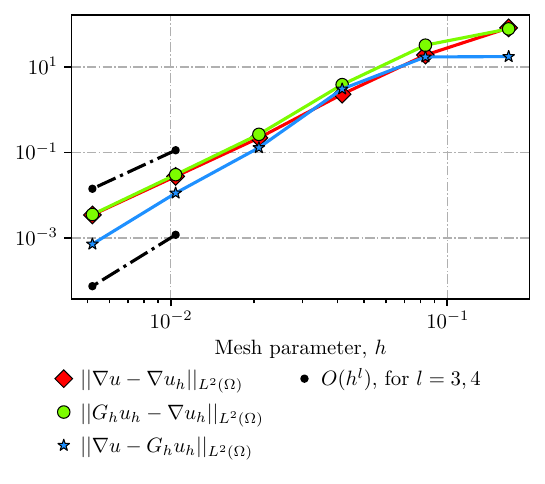}
  \end{subfigure}
\caption{The gradient of the error, the indicator, the error of the recovered gradient, for the poisson problem with exact solution $ u(x_1,x_2) = \sin{8\pi x_1}\sin{8\pi x_2}$ and various values of $h$, defined through $N+1$. For $r = 1$ on the left, $r = 3$ on the right and $N+1 = 6, 12, 24, 48, 96$ and $192$.}
\label{fig:Poisson:oscillations}
\end{figure}

\begin{figure}
    \includegraphics[width = \linewidth]{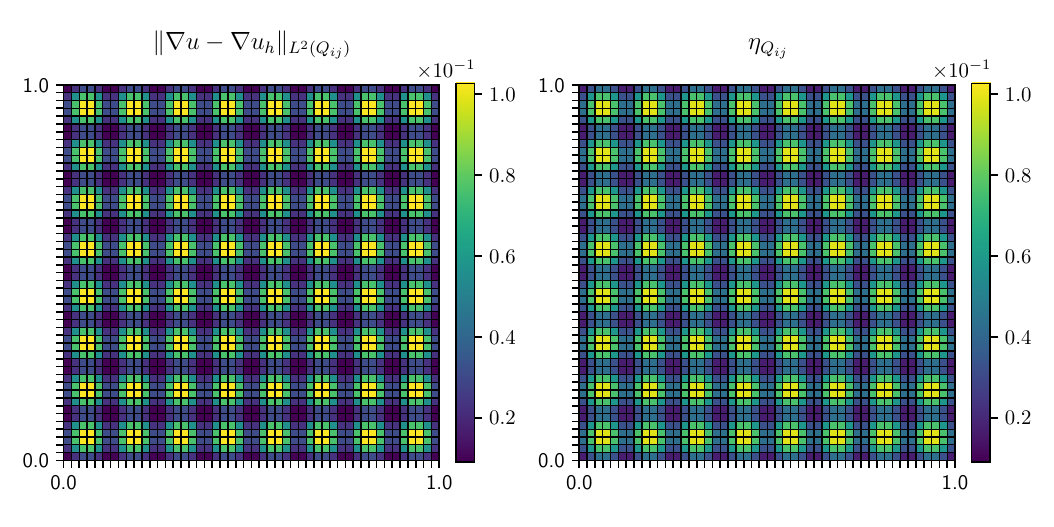}
    \caption{Local results for $r = 1$ and $h = \frac{1}{48}$ for the poisson problem with exact solution $ u(x_1,x_2) = \sin{8\pi x_1}\sin{8\pi x_2}$. On the left is the local error, and on the right is the local indicator. Additionally, we used the same color scale in both plots.}
  \label{fig:Poisson_const_coeff:oscillations:loc_err_k_2}
\end{figure}

\begin{figure}
    \includegraphics[width = \linewidth]{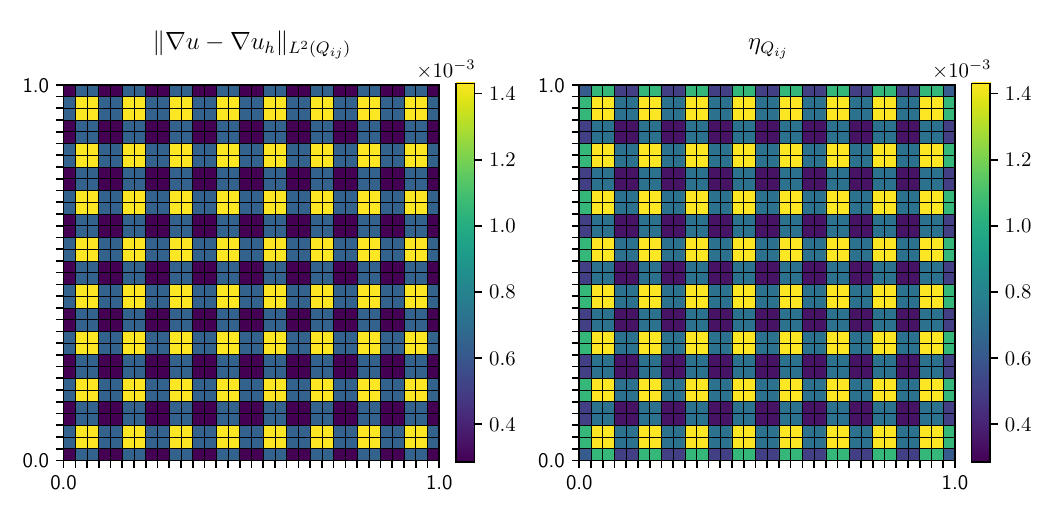}
    \caption{Local results for $r = 3$ and $h = \frac{1}{32}$ for the poisson problem with exact solution $ u(x_1,x_2) = \sin{8\pi x_1}\sin{8\pi x_2}$. On the left is the local error, and on the right is the local indicator. Additionally, we used the same color scale in both plots.}
    \label{fig:Poisson_const_coeff:oscillations:loc_err_k_4}
\end{figure}

\paragraph{Numerical example 2}

In this example, the exact solution is given by
$$
u(x_1,x_2) = \arctan{\left(25(\sqrt{x_1^2+x_2^2} - 0.5)\right)}.
$$
The solution has a steep gradient. In Figure \ref{fig:Poisson:steep_gradient}, we plot the error in the natural norm, alongside the indicator and the error of the recovered gradient for $r = 1$ and $3$. In Figures \ref{fig:Poisson_const_coeff:steep_gradient:loc_err_k_2} and \ref{fig:Poisson_const_coeff:steep_gradient:loc_err_k_4}, the gradient of the error and the indicator are shown locally for \(r = 1\) and \(r = 3\), respectively. We draw the same conclusions as in the previous example.

\begin{figure}
  \begin{subfigure}[b]{0.49\textwidth}
    \includegraphics[width=\linewidth]{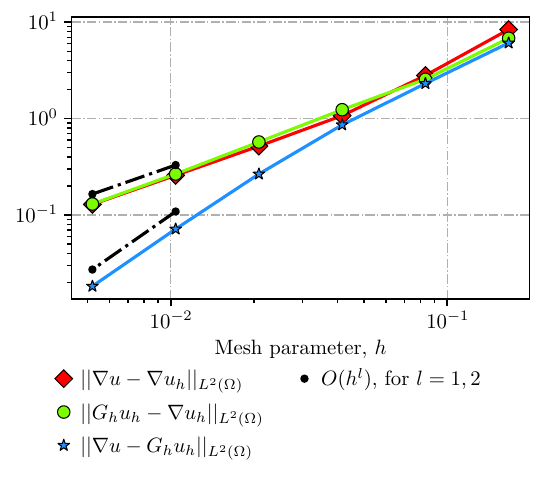}
  \end{subfigure}
  \hfill
  \begin{subfigure}[b]{0.49\textwidth}
    \includegraphics[width=\textwidth]{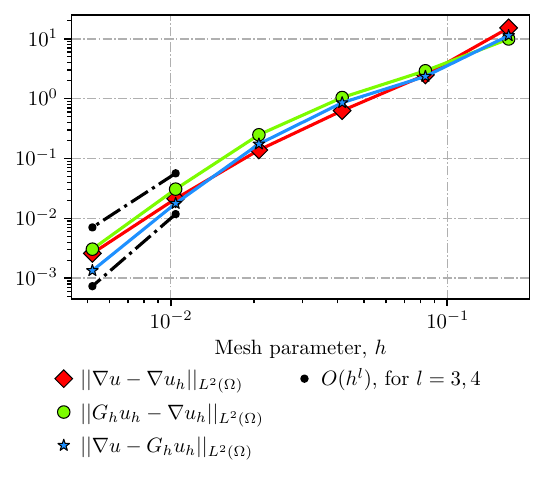}
  \end{subfigure}
\caption{The gradient of the error, the indicator and the error of the recovered gradient for the poisson problem with exact solution $u(x_1,x_2) = \arctan{\left(25(\sqrt{x_1^2+x_2^2} - 0.5)\right)}$ and various values of $h$, defined through $N+1$. For $r = 1$ on the left, $r = 3$ on the right and $N+1 = 6, 12, 24, 48, 96$ and $192$.}
\label{fig:Poisson:steep_gradient}
\end{figure}

\begin{figure}
    \includegraphics[width = \linewidth]{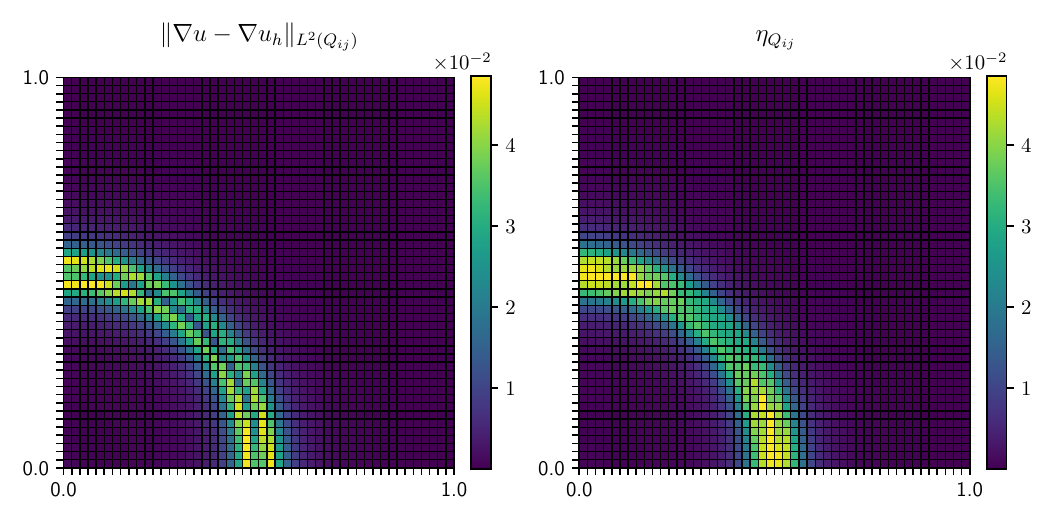}
    \caption{Local results for $r = 1$ and $h = \frac{1}{48}$ for the  poisson problem with exact solution $u(x_1,x_2) = \arctan{\left(25(\sqrt{x_1^2+x_2^2} - 0.5)\right)}$. On the left is the local error, and on the right is the local indicator. Additionally, we used the same color scale in both plots.}
    \label{fig:Poisson_const_coeff:steep_gradient:loc_err_k_2}
\end{figure}

\begin{figure}
    \includegraphics[width = \linewidth]{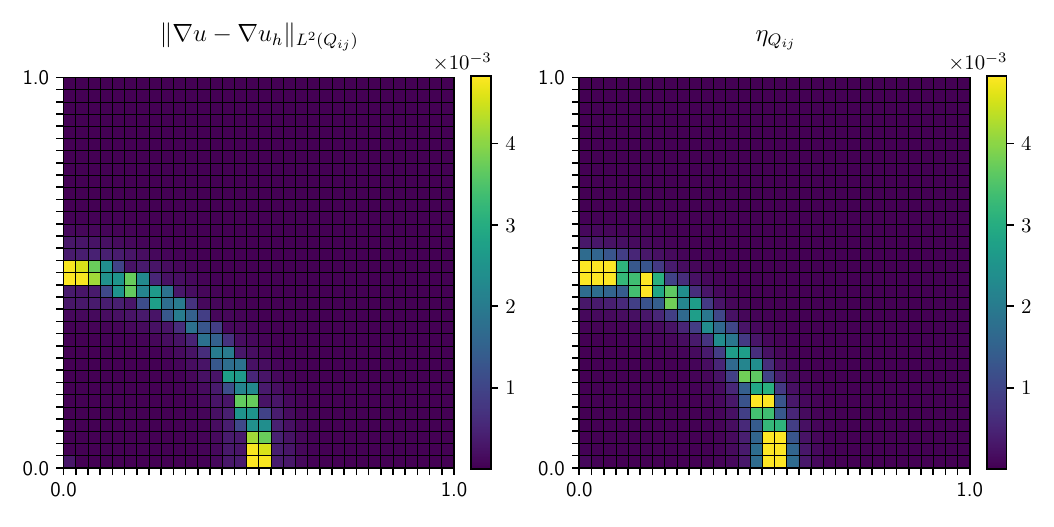}
    \caption{Local results for $r = 3$ and $h = \frac{1}{32}$ for the  poisson problem with exact solution $u(x_1,x_2) = \arctan{\left(25(\sqrt{x_1^2+x_2^2} - 0.5)\right)}$. On the left is the local error, and on the right is the local indicator. Additionally, we used the same color scale in both plots.}
    \label{fig:Poisson_const_coeff:steep_gradient:loc_err_k_4}
\end{figure}

\paragraph{Numerical example 3}\label{par:poisson:numerical_example_3}
In this example, we restrict to the second-order FDM, i.e. $r = 1$, and consider the Poisson equation in 2D with varying discontinuous coefficients:
\begin{equation}
 \begin{alignedat}{2}
    -\nabla \cdot (a (x_1,x_2) \nabla u(x_1,x_2)) &= \sin{\pi x_1}\sin{\pi x_2}&\quad& (x_1,x_2)\in \Omega,\\
    u(x_1,x_2) &= 0&\quad&   (x_1,x_2)\in \partial \Omega,
\end{alignedat}
\end{equation}
where
\begin{equation}
    a(x_1,x_2) =  \left\{ \begin{array}{cc}
        0.5 &\quad  x_1 \leq 0.5,\quad x_2 \leq 0.5,\\
        10 &\quad  x_1 > 0.5,\quad x_2 \leq 0.5,\\
        10 &\quad  x_1 \leq 0.5,\quad x_2 > 0.5,\\
        0.5 &\quad  x_1 > 0.5,\quad x_2 > 0.5.\\
    \end{array} \right.
\end{equation}
we use the numerical solution on a finer mesh with mesh parameter $h = \frac{1}{256}$ as the reference solution. We need to adjust the recovered gradient \(\mathrm{G}_h u_h = \left(\mathrm{G}_h^{x_1} u_h \; \mathrm{G}_h^{x_2} u_h\right)^T\) to account for the discontinuity in the gradient. We apply the PPR operator $\mathrm{G}_h$ separately in each subdomain where $a(x_1,x_2)$ is constant, allowing for a discontinuity along the specified interface. In Table \ref{tab:Poisson_disc_coeff:glob_conv}, we list the values and rates of the error in the natural norm, alongside the indicator, the error of the recovered gradient, and the effectivity index. We observe that the gradient of the error converges with order one. Moreover, the effectivity index converges to $1$, so the indicator is asymptotically exact. This is because the recovered gradient is superconvergent. In Figure \ref{fig:Poisson_const_coeff:loc_err_k_2}, the gradient of the error and the indicator are shown locally. We observe that, locally, the features of the error are captured accurately by the error indicator.
\begin{table}
\footnotesize
\caption{The gradient of the error, the indicator, the error of the recovered gradient and the effectivity index for $r = 1$ and the  poisson problem with varying discontinuous coefficients for various values of $h$, defined through $N+1$}
\label{tab:Poisson_disc_coeff:glob_conv}
\begin{tabular}{lllllllll}
\hline\noalign{\smallskip}
$N+1$& $h$ & \multicolumn{2}{l}{$||\nabla u - \nabla u_h||_{L^2(\Omega)}$} & \multicolumn{2}{l}{$\eta_h$} & \multicolumn{2}{l}{$||\nabla u - \mathrm{G}_h u_h||_{L^2(\Omega)}$} & e.i. \\
& & Value & Rate & Value & Rate & Value & Rate &  \\ \hline
\noalign{\smallskip}\hline\noalign{\smallskip}
4 & 1/4 & 3.979e-02 &  & 2.239e-02 &  & 2.691e-02 &  & 0.563 \\
8 & 1/8 & 2.043e-02 & 0.96 & 1.624e-02 & 0.46 & 1.031e-02 & 1.38 & 0.795 \\
16& 1/16& 1.012e-02 & 1.01 & 9.315e-03 & 0.80 & 3.163e-03 & 1.70 & 0.920 \\
32& 1/32& 4.996e-03 & 1.02 & 4.895e-03 & 0.93 & 9.617e-04 & 1.72 & 0.980 \\
64& 1/64& 2.547e-03 & 0.97 & 2.490e-03 & 0.98 & 3.500e-04 & 1.46 & 0.978 \\
\noalign{\smallskip}\hline
\end{tabular}
\end{table}

\begin{figure}
    \includegraphics[width = \linewidth]{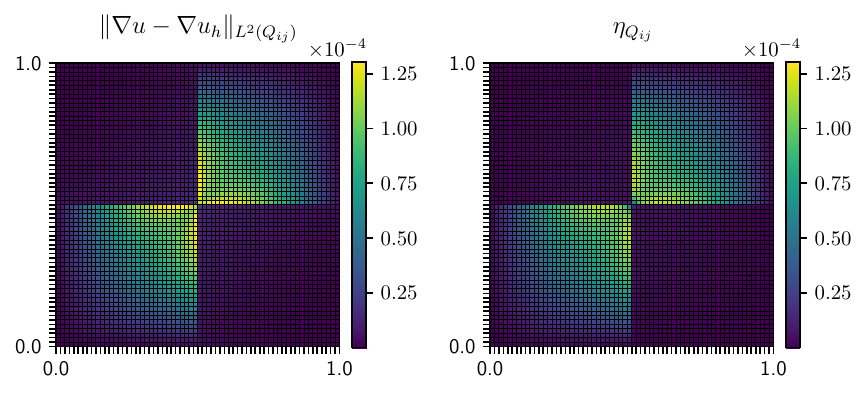}
    \caption{Local results for $r = 1$ and $h = \frac{1}{64}$ for the  poisson problem with varying discontinous coefficients. On the left is the local error, and on the right is the local indicator. Additionally, we used the same color scale in both plots.}
    \label{fig:Poisson_const_coeff:loc_err_k_2}
\end{figure}
\subsection{The Wave Equation}\label{subsection:Wave_equation}

In this section, we consider two numerical examples of the Wave equation. We first consider the 2D ($d = 3$) acoustic wave equation in a discontinuous heterogeneous medium. For this example, we restrict ourselves to the case $r = 1$ and examine the error in the non-natural norm and the RMS natural norm stated in equations \eqref{eq:a_priori_wave} and \eqref{eq:natural_norm_wave}, respectively. For the second example, we consider the 3D ($d = 4$) acoustic wave equation in a homogeneous medium. Specifically, we consider the cases $r = 1$ and $ r = 3$, and examine the error in the RMS natural norm. The FDMs used in this section can be found in \cite{cohen1996construction}. Moreover, we use a numerical solution on a fine grid as ground truth. We now introduce the following quantity to assess the convergence rate of the recovered gradient in the RMS natural norm:
\begin{equation*}
    \overline{|||\tilde{e}_{h}|||}^2 :=  \overline{\left\| \tilde{e}_{h}^t\right\|}^2_{L^2((0,1)^{d-1})} + \overline{\left\|\tilde{e}_{h}^x \right\|}^2_{L^2((0,1)^{d-1})}.
\end{equation*}
where 
\begin{align*}
 \overline{\left\| \tilde{e}_{h}^x\right\|}^2_{L^2((0,1)^{d-1})} &:= \overline{\|\nabla u - \mathrm{G}^x_{h}u_{h}\|}_{L^2((0,1)^{d-1})}^2 ,\\
 \overline{\left\| \tilde{e}_{h}^t\right\|}^2_{L^2((0,1)^{d-1})} &:= \overline{\left\|\frac{\partial u}{\partial t} - \mathrm{G}^t_{h}u_{h}\right\|}_{L^2((0,1)^{d-1})}^2. 
\end{align*}
and similarly to assess the convergence rate of the recovered gradient in the non-natural norm, denoted without a bar.

\paragraph{Numerical example 1}

In this example, we consider the 2D acoustic wave equation in a discontinuous, heterogeneous medium on the unit square, with a Gaussian pulse as initial condition:
\begin{equation*}
\begin{alignedat}{2}
    &\frac{\partial^2 u}{\partial t^2}(x_1,x_2,t)  
    - \nabla \cdot(\mu(x_1,x_2) \nabla u(x_1,x_2,t)) = 0,& \quad  &(x_1,x_2,t) \in \Omega,\\
     &u(x_1,x_2,t)= 0,& \quad &(x_1,x_2,t)\in \partial((0,1)^2)\times [0,T],\\
     &u(x_1,x_2,0) = f(x_1,x_2,t),& \quad &(x_1,x_2)\in (0,1)^2,\\
     &\frac{\partial u}{\partial t}(x_1,x_2,0) = 0,& \quad &(x_1,x_2)\in (0,1)^2,\\    
\end{alignedat}
\end{equation*}
where $\Omega = (0,1)^2\times(0,T)$ (with $d = 3$), $f(x_1,x_2,t) = \exp{-\frac{(x_1-0.25)^2 + (x_2-0.5)^2}{0.005}}$,  
\begin{equation*}
    \mu(x_1,x_2) = \left\{\begin{array}{cc}
         1 &x_1 \leq 0.5,  \\
         0.5 &x_1 > 0.5, 
    \end{array}\right.
\end{equation*} 
is the bulk modulus, and the wave speed is given by 
\begin{equation*}
c(x_1,x_2) = \sqrt{\frac{\mu(x_1,x_2)}{\rho(x_1,x_2)}} =  \left\{\begin{array}{cc}
          1 &x_1 \leq 0.5,  \\
         \frac{1}{\sqrt{2}} &x_1 > 0.5.
    \end{array}\right.
\end{equation*} 
In this numerical example, the heterogeneity in the medium reduces the local spatial wavelength, and combined with reflections at the boundaries, it generates very fine features that are difficult to resolve accurately. This makes it an interesting example to assess the performance of our indicator. In the following simulations, we have a CFL number of $0.705$ and final time $T = 0.705$. At the final time, the Gaussian pulse has crossed the heterogeneous medium and reflected both at the material interface and the boundary on the left, producing wave interactions. Note that we have to adjust the recovered gradient $\mathrm{G}_{h}u_{h} = \begin{pmatrix} \mathrm{G}_{h}^{x_1}u_{h}& \mathrm{G}_{h}^{x_2}u_{h}& \mathrm{G}_{h}^{t}u_{h}\end{pmatrix}^T$ to capture the discontinuity in the spatial derivatives. We apply the PPR operator $\mathrm{G}_h$ separately in each subdomain where $\mu$ is constant. On the left in Figure \ref{fig:wave_eq_2D_var:glob_conv_T_1.41}, we plot the error in the RMS natural norm (see equation \eqref{eq:natural_norm_wave}), its indicator and the error of the recovered gradient. The effectivity index converges to 1, confirming that the indicator is asymptotically exact, since the recovered gradient is superconvergent. In Tables \ref{tab:wave_eq_2D_var:Gaussian:glob_conv_T_1.41_space} and \ref{tab:wave_eq_2D_var:Gaussian:glob_conv_T_1.41_time} we show the results for spatial and temporal derivatives of the error, respectively. In space, the effectivity index approaches 1 already on coarse meshes, while in time, finer resolutions are required. Local error features are accurately captured by the indicators, as illustrated in Figures \ref{fig:wave_eq_2D_var:Gaussian:loc_error_T_1.41_space} and \ref{fig:wave_eq_2D_var:Gaussian:loc_error_T_1.41_time}. Equivalent results for the non-natural norm (see equation \eqref{eq:a_priori_wave}) are provided on the right in Figure  \ref{fig:wave_eq_2D_var:glob_conv_T_1.41} and Tables \ref{tab:wave_eq_2D_var:Gaussian:glob_conv_T_1.41_space_non_nat} and \ref{tab:wave_eq_2D_var:Gaussian:glob_conv_T_1.41_time_non_nat}, where similar convergence is observed. Compared to the effectivity index for the error in the RMS natural norm, the effectivity index approaches 1 for coarser meshes.


\begin{figure}
  \begin{subfigure}[b]{0.49\textwidth}
    \includegraphics[width=\linewidth]{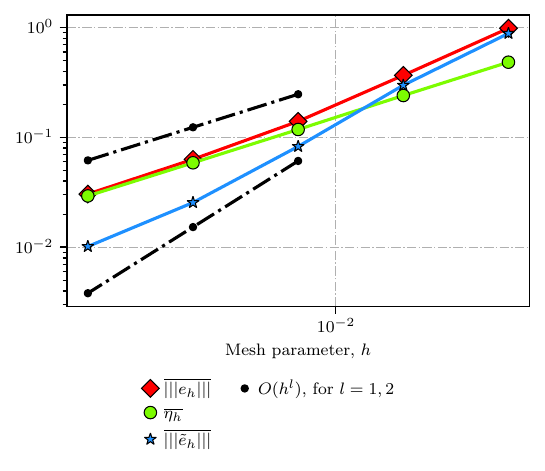}
  \end{subfigure}
  \hfill
  \begin{subfigure}[b]{0.49\textwidth}
    \includegraphics[width=\textwidth]{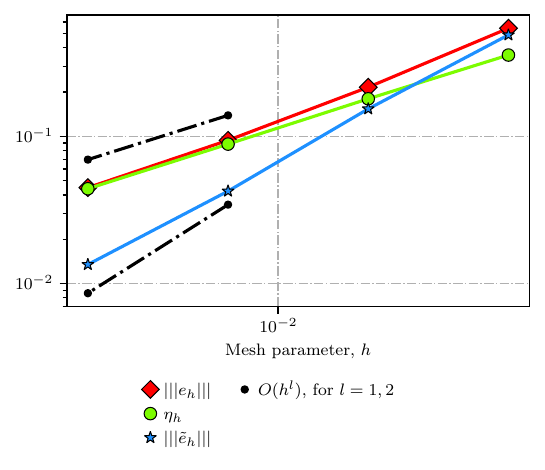}
  \end{subfigure}
\caption{Errors in the RMS natural norm (left) and non-natural norm (right), together with the indicator and recovered gradient error for the wave equation in a discontinuous, heterogeneous medium. Results are shown for \(r = 1\) and \(N_x+1 = N_t+1 = 32, 64, 128, 256\), and \(512\) (one less resolution level for the right plot).}
\label{fig:wave_eq_2D_var:glob_conv_T_1.41}
\end{figure}

\begin{table}
\footnotesize
\caption{The gradient of the error in the RMS norm, the spatial indicator, the error of the recovered gradient, and the effectivity index for $r = 1$, the wave equation in a discontinuous, heterogeneous medium and various values of $h$ (corresponding to different $N_x+1$ and $N_t+1$)}
\label{tab:wave_eq_2D_var:Gaussian:glob_conv_T_1.41_space}
\begin{tabular}{llllllllll}
\hline\noalign{\smallskip}
$N_t+1$ & $N_x+1$& $h$ & \multicolumn{2}{l}{$\overline{\left \| \nabla e_{h}\right \|}_{L^2((0,1)^{2})} $} & \multicolumn{2}{l}{$\overline{\eta_{h}^x}$} & \multicolumn{2}{l}{$\overline{|| \tilde{e}_{h}^x||}_{L^2((0,1)^{2})} $} & e.i. \\
& & & Value & Rate & Value & Rate & Value & Rate  \\ \hline
\noalign{\smallskip}\hline\noalign{\smallskip}
32& 32 & 1/32 & 7.937e-01 &  & 4.450e-01 &  & 6.776e-01 &  & 0.561 \\
64& 64 & 1/64 & 3.030e-01 & 1.39 & 2.148e-01 & 1.05 & 2.391e-01 & 1.50 & 0.709 \\
128& 128 & 1/128 & 1.188e-01 & 1.35 & 1.039e-01 & 1.05 & 6.716e-02 & 1.83 & 0.875 \\
256& 256 & 1/256 & 5.443e-02 & 1.13 & 5.146e-02 & 1.01 & 2.053e-02 & 1.71 & 0.945 \\
512& 512 & 1/512 & 2.643e-02 & 1.04 & 2.567e-02 & 1.00 & 8.140e-03 & 1.33 & 0.971 \\
\noalign{\smallskip}\hline
\end{tabular}
\end{table}

\begin{table}
\footnotesize
\caption{The temporal derivative of the error in the RMS norm, the temporal indicator, the error of the recovered temporal derivative, and the effectivity index for $r = 1$, the wave equation in a discontinuous, heterogeneous medium and various values of $h$ (corresponding to different $N_x+1$ and $N_t+1$)}
\label{tab:wave_eq_2D_var:Gaussian:glob_conv_T_1.41_time}
\begin{tabular}{llllllllll}
\hline\noalign{\smallskip}
$N_t+1$ & $N_x+1$ & $h$ & \multicolumn{2}{l}{$\overline{\left \| \frac{\partial e_{h}}{\partial t}\right \|}_{L^2((0,1)^{2})} $} & \multicolumn{2}{l}{$\overline{\eta_{h}^t}$} & \multicolumn{2}{l}{$\overline{||\tilde{e}_{h}^t||}_{L^2((0,1)^{2})} $} & e.i. \\
& & & Value & Rate & Value & Rate & Value & Rate  \\ \hline
\noalign{\smallskip}\hline\noalign{\smallskip}
32& 32& 1/32 & 5.835e-01 &  & 1.894e-01 &  & 5.677e-01 &  & 0.325 \\
64& 64& 1/64 & 2.072e-01 & 1.49 & 1.080e-01 & 0.81 & 1.783e-01 & 1.67 & 0.521 \\
128& 128& 1/128 & 7.394e-02 & 1.49 & 5.594e-02 & 0.95 & 4.822e-02 & 1.89 & 0.757 \\
256& 256& 1/256 & 3.198e-02 & 1.21 & 2.824e-02 & 0.99 & 1.513e-02 & 1.67 & 0.883 \\
512& 512& 1/512 & 1.521e-02 & 1.07 & 1.415e-02 & 1.00 & 6.077e-03 & 1.32 & 0.931 \\
\noalign{\smallskip}\hline
\end{tabular}
\end{table}
\begin{figure}
    \includegraphics[width = \linewidth]{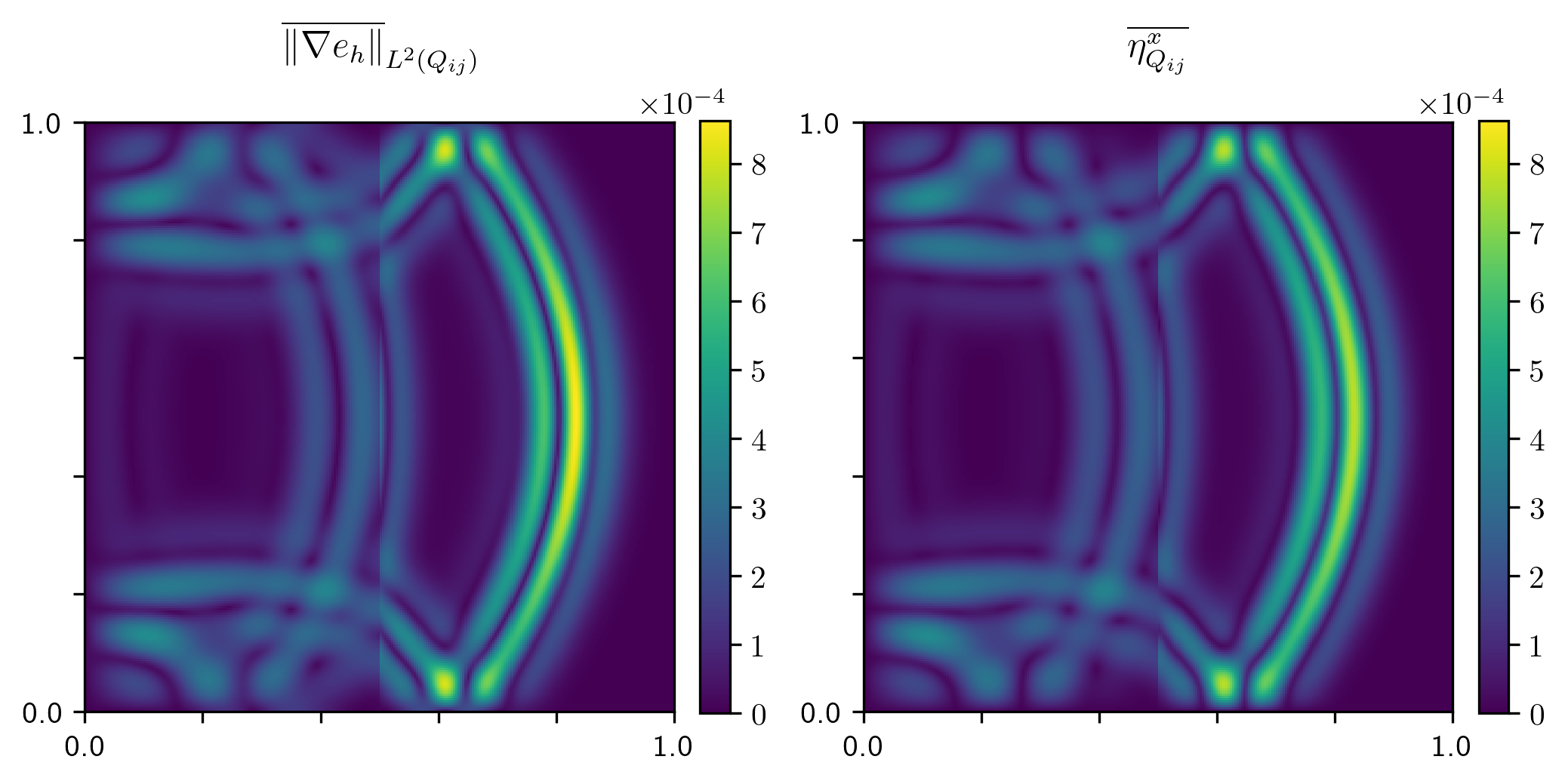}
    \caption{Local results for $r = 1,\;h = \frac{1}{256}$ and the acoustic wave equation in a discontinuous, heterogeneous medium. On the left is the local RMS gradient of the error, and on the right is its corresponding local indicator. Additionally, we used the same color scale in both plots.}
    \label{fig:wave_eq_2D_var:Gaussian:loc_error_T_1.41_space}
\end{figure}
\begin{figure}
    \includegraphics[width = \linewidth]{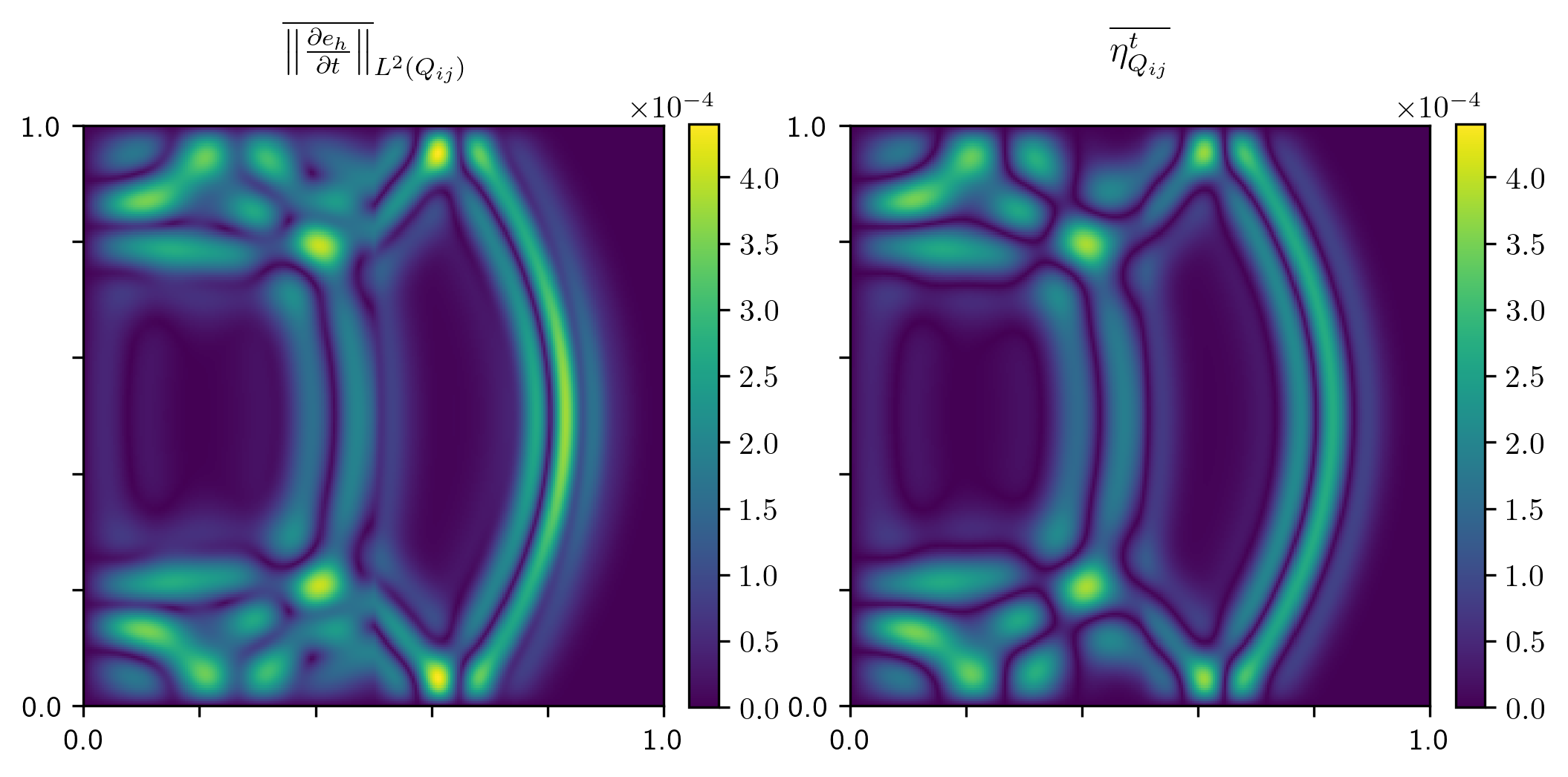}
    \caption{Local results for $r = 1,\;h = \frac{1}{256}$ and the acoustic wave equation in a discontinuous, heterogeneous medium. On the left is the local RMS temporal derivative of the error, and on the right is its corresponding local indicator. Additionally, we used the same color scale in both plots.}
    \label{fig:wave_eq_2D_var:Gaussian:loc_error_T_1.41_time}
\end{figure}


\begin{table}
\footnotesize
\caption{The gradient of the error in the non natural norm, the spatial indicator, the error of the recovered gradient, and the effectivity index for $r = 1$, the wave equation in a discontinuous, heterogeneous medium and various values of $h$ (corresponding to different $N_x+1$ and $N_t+1$)}
\label{tab:wave_eq_2D_var:Gaussian:glob_conv_T_1.41_space_non_nat}
\begin{tabular}{llllllllll}
\hline\noalign{\smallskip}
$N_t+1$ &$N_x+1$ &$h$ & \multicolumn{2}{l}{$\left \| \frac{\partial e_{h}}{\partial t}\right \|_{L^2(\Omega)}$} & \multicolumn{2}{l}{$\eta_{h}^t$} & \multicolumn{2}{l}{$||| \tilde{e}_{h}^t|||_{L^2(\Omega)}$} & e.i. \\  
& & & Value & Rate & Value & Rate & Value & Rate  \\ \hline
\noalign{\smallskip}\hline\noalign{\smallskip}
32&32&1/32 & 3.277e-01 &  & 1.748e-01 &  & 3.069e-01 &  & 0.533 \\
64&64&1/64 & 1.240e-01 & 1.40 & 9.327e-02 & 0.91 & 9.305e-02 & 1.72 & 0.752 \\
128&128&1/128 & 5.201e-02 & 1.25 & 4.718e-02 & 0.98 & 2.539e-02 & 1.87 & 0.907 \\
256&256&1/256 & 2.441e-02 & 1.09 & 2.365e-02 & 1.00 & 7.993e-03 & 1.67 & 0.969 \\
\noalign{\smallskip}\hline
\end{tabular}
\end{table}

\begin{table}
\footnotesize
\caption{The temporal derivative of the error in the non-natural norm, the temporal indicator, the error of the recovered temporal derivative, and the effectivity index for $r = 1$, the wave equation in a discontinuous, heterogeneous medium and various values of $h$ (corresponding to different $N_x+1$ and $N_t+1$)}
\label{tab:wave_eq_2D_var:Gaussian:glob_conv_T_1.41_time_non_nat}
\begin{tabular}{llllllllll}
\hline\noalign{\smallskip}
$N_t+1$ &$N_x+1$ & $h$ & \multicolumn{2}{l}{$||| e_{h}||| $} & \multicolumn{2}{l}{$\eta_{h}$} & \multicolumn{2}{l}{$||| \tilde{e}_{h}||| $} & e.i. \\
& & & Value & Rate & Value & Rate & Value & Rate  \\ \hline
\noalign{\smallskip}\hline\noalign{\smallskip}
32&32&1/32 & 5.432e-01 &  & 3.571e-01 &  & 4.892e-01 &  & 0.657 \\
64&64&1/64 & 2.159e-01 & 1.33 & 1.800e-01 & 0.99 & 1.537e-01 & 1.67 & 0.834 \\
128&128&1/128 & 9.401e-02 & 1.20 & 8.857e-02 & 1.02 & 4.242e-02 & 1.86 & 0.942 \\
256&256&1/256 & 4.481e-02 & 1.07 & 4.404e-02 & 1.01 & 1.344e-02 & 1.66 & 0.983 \\
\noalign{\smallskip}\hline
\end{tabular}
\end{table}

\paragraph{Numerical example 2}

In this example, we consider the acoustic wave equation in a homogeneous medium with unit wave speed on the unit cube with a Gaussian pulse as an initial condition: 

\begin{equation*}
\begin{alignedat}{2}
     &\frac{\partial^2 u}{\partial t^2}(x_1,x_2,x_3,t) - \Delta u(x_1,x_2,x_3,t) = 0,&\quad  &(x_1,x_2,x_3,t) \in \Omega,\\
     &u(x_1,x_2,x_3,t)= 0,&\quad &(x_1,x_2,x_3,t)\in \partial((0,1)^3)\times [0,T],\\
     &u(x_1,x_2,x_3,0)= f(x_1,x_2,x_3,t),&\quad &(x_1,x_2,x_3)\in (0,1)^3,\\
     &\frac{\partial u}{\partial t}(x_1,x_2,x_3,0)= 0,&\quad & (x_1,x_2,x_3)\in (0,1)^3.\\    
\end{alignedat}
\end{equation*}
where $\Omega = (0,1)^3\times(0,T)$ (with $d = 4$) and $f(x_1,x_2,x_3,t) = \\ \exp{-\frac{(x_1-0.5)^2 + (x_2-0.5)^2 + (x_3-0.5)^2}{0.01}}$. Although the medium is homogeneous, reflections at the boundaries generate fine spatial features that are difficult to resolve accurately. We do not have an exact solution, and use a reference solution on a finer mesh as the ground truth. We set the CFL number in the following simulations to $0.5$ and the final time $T = 1$. At the final time, the Gaussian pulse has reflected off the boundaries and returned to its initial center position, producing many interacting waves. In Figure \ref{fig:wave_eq_3D_var:glob_conv_T_1}, we list the error in the RMS natural norm (see equation \eqref{eq:natural_norm_wave}), alongside the indicator and the error of the recovered gradient for $r = 1$ and $3$. We observe that the effectivity index converges to 1 for both $r = 1$ and $3$, and therefore the indicator is asymptotically exact. The recovered quantities in the RMS natural norm exhibit a superconvergent rate. In Tables \ref{tab:wave_eq_3D:Gaussian:glob_conv_T_0.5_space_r_1}, \ref{tab:wave_eq_3D:Gaussian:glob_conv_T_0.5_space_r_3}, \ref{tab:wave_eq_3D:Gaussian:glob_conv_T_0.5_time_r_1} and \ref{tab:wave_eq_3D:Gaussian:glob_conv_T_0.5_time_r_3} the spatial and temporal derivatives of the error are presented alongside their respective indicators, recovered gradients and effectivity indices for $r = 1$ and $3$. For the spatial derivatives of the error, the indicator demonstrates clear asymptotic exactness. In contrast, for the temporal derivatives, we observe an emerging trend suggesting that the indicator is approaching asymptotic exactness over refinement.
In Figures \ref{fig:wave_eq_3D:Gaussian:loc_error_k_2_space}, \ref{fig:wave_eq_3D:Gaussian:loc_error_k_2_time}, \ref{fig:wave_eq_3D:Gaussian:loc_error_k_4_space} and \ref{fig:wave_eq_3D:Gaussian:loc_error_k_4_time}, we have the local error due to space and time together with their respective indicator for $r = 1$ and $r = 3$, respectively. We observe that the error indicator due to space accurately captures the features of the error due to space, but in time, the error pattern is captured, but not the size of the features. We suggest considering the error in the RMS natural norm and not the spatial and temporal contributions separately.

\begin{figure}
  \begin{subfigure}[b]{0.49\textwidth}
    \includegraphics[width=\linewidth]{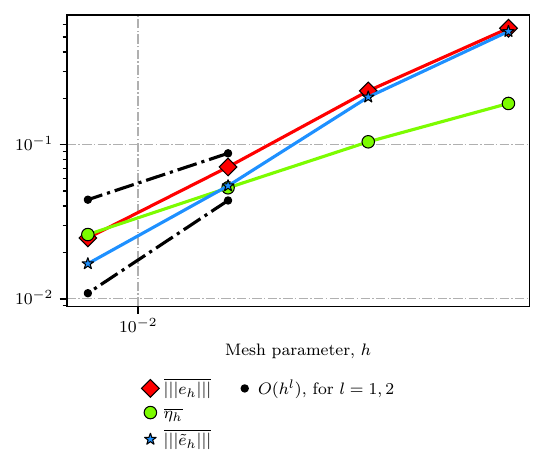}
  \end{subfigure}
  \hfill
  \begin{subfigure}[b]{0.49\textwidth}
    \includegraphics[width=\textwidth]{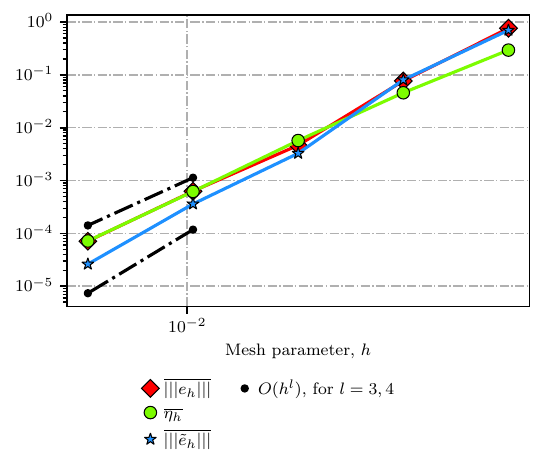}
  \end{subfigure}
\caption{Errors in the RMS natural norm, together with the indicator and recovered gradient error for the wave equation in a homogeneous medium. Results are shown for \(r = 1\) and \( (N_t+1, N_x+1) = (32, 16), (64,32), (128, 64)\), and \((256, 128) \) on the left and \(r = 3\) and \( (N_t+1, N_x+1) = (24, 12), (48,24), (96, 48), (192, 96)\), and \((384, 192) \) on the right.}
\label{fig:wave_eq_3D_var:glob_conv_T_1}
\end{figure}

\begin{table}
\footnotesize
\caption{The gradient of the error in the RMS norm, the spatial indicator, the error of the recovered gradient, and the effectivity index for $r = 1$, the wave equation in a homogeneous medium and various values of $h$ (corresponding to different $N_x+1$ and $N_t+1$)}
\label{tab:wave_eq_3D:Gaussian:glob_conv_T_0.5_space_r_1}
\begin{tabular}{llllllllll}
\hline\noalign{\smallskip}
$N_t+1$ & $N_x+1$ & $h$ & \multicolumn{2}{l}{$\overline{\left \| \nabla e_{h}\right \|}_{L^2((0,1)^{3})} $} & \multicolumn{2}{l}{$\overline{\eta_{h}^x}$} & \multicolumn{2}{l}{$\overline{||| \tilde{e}_{h}^x|||}_{L^2((0,1)^{3})} $} & e.i. \\
& & & Value & Rate & Value & Rate & Value & Rate  \\ \hline
\noalign{\smallskip}\hline\noalign{\smallskip}
32 &16 & 1/16 & 4.136e-01 &  & 1.686e-01 &  & 3.749e-01 &  & 0.408 \\
64 &32 & 1/32 & 1.587e-01 & 1.38 & 9.113e-02 & 0.89 & 1.379e-01 & 1.44 & 0.574 \\
128 &64 & 1/64 & 5.465e-02 & 1.54 & 4.473e-02 & 1.03 & 3.786e-02 & 1.87 & 0.818 \\
256 &128 & 1/128 & 2.028e-02 & 1.43 & 2.205e-02 & 1.02 & 1.339e-02 & 1.50 & 1.087 \\
\noalign{\smallskip}\hline
\end{tabular}
\end{table}

\begin{table}
\footnotesize
\caption{The gradient of the error in the RMS norm, the spatial indicator, the error of the recovered gradient, and the effectivity index for $r = 3$, the wave equation in a homogeneous medium and various values of $h$ (corresponding to different $N_x+1$ and $N_t+1$)}
\label{tab:wave_eq_3D:Gaussian:glob_conv_T_0.5_space_r_3}
\begin{tabular}{llllllllll}
\hline\noalign{\smallskip}
$N_t+1$ & $N_x+1$ & $h$ & \multicolumn{2}{l}{$\overline{\left \| \nabla e_{h}\right \|}_{L^2((0,1)^{3})} $} & \multicolumn{2}{l}{$\overline{\eta_{h}^x}$} & \multicolumn{2}{l}{$\overline{||| \tilde{e}_{h}^x|||}_{L^2((0,1)^{3})} $} & e.i. \\
& & & Value & Rate & Value & Rate & Value & Rate  \\ \hline
\noalign{\smallskip}\hline\noalign{\smallskip}
24& 12& 1/4 & 5.378e-01 &  & 2.685e-01 &  & 4.601e-01 &  & 0.499 \\
48& 24& 1/8 & 5.645e-02 & 3.25 & 4.411e-02 & 2.61 & 6.120e-02 & 2.91 & 0.781 \\
96& 48& 1/16 & 4.397e-03 & 3.68 & 5.581e-03 & 2.98 & 2.913e-03 & 4.39 & 1.269 \\
192& 96& 1/32 & 5.797e-04 & 2.92 & 6.065e-04 & 3.20 & 2.866e-04 & 3.35 & 1.046 \\
384& 192& 1/64 & 6.801e-05 & 3.09 & 7.012e-05 & 3.11 & 2.151e-05 & 3.74 & 1.031 \\
\noalign{\smallskip}\hline
\end{tabular}
\end{table}

\begin{table}
\footnotesize
\caption{The temporal derivative of the error in the RMS norm, the temporal indicator, the error of the recovered temporal derivative, and the effectivity index for $r = 1$, the wave equation in a homogeneous medium and various values of $h$ (corresponding to different $N_x+1$ and $N_t+1$)}
\label{tab:wave_eq_3D:Gaussian:glob_conv_T_0.5_time_r_1}
\begin{tabular}{llllllllll}
\hline\noalign{\smallskip}
$N_t+1$ & $N_x+1$ & $h$ & \multicolumn{2}{l}{$\overline{\left \| \frac{\partial e_{h}}{\partial t}\right \|}_{L^2((0,1)^{3})} $} & \multicolumn{2}{l}{$\overline{\eta_{h}^t}$} & \multicolumn{2}{l}{$\overline{||| \tilde{e}_{h}^t|||}_{L^2((0,1)^{3})} $} & e.i. \\
& & & Value & Rate & Value & Rate & Value & Rate  \\ \hline
\noalign{\smallskip}\hline\noalign{\smallskip}
32& 16 & 1/16 & 3.919e-01 &  & 7.633e-02 &  & 3.930e-01 &  & 0.195 \\
64& 32 & 1/32 & 1.573e-01 & 1.32 & 5.084e-02 & 0.59 & 1.500e-01 & 1.39 & 0.323 \\
128& 64 & 1/64 & 4.631e-02 & 1.76 & 2.744e-02 & 0.89 & 3.851e-02 & 1.96 & 0.592 \\
256& 128 & 1/128 & 1.424e-02 & 1.70 & 1.394e-02 & 0.98 & 1.026e-02 & 1.91 & 0.978 \\
\noalign{\smallskip}\hline
\end{tabular}
\end{table}

\begin{table}
\footnotesize
\caption{The temporal derivative of the error in the RMS norm, the temporal indicator, the error of the recovered temporal derivative, and the effectivity index for $r = 3$, the wave equation in a homogeneous medium and various values of $h$ (corresponding to different $N_x+1$ and $N_t+1$)}
\label{tab:wave_eq_3D:Gaussian:glob_conv_T_0.5_time_r_3}
\begin{tabular}{llllllllll}
\hline\noalign{\smallskip}
$N_t+1$ & $N_x+1$ & $h$ & \multicolumn{2}{l}{$\overline{\left \| \frac{\partial e_{h}}{\partial t}\right \|}_{L^2((0,1)^{3})} $} & \multicolumn{2}{l}{$\overline{\eta_{h}^t}$} & \multicolumn{2}{l}{$\overline{||| \tilde{e}_{h}^t|||}_{L^2((0,1)^{3})} $} & e.i. \\
& & & Value & Rate & Value & Rate & Value & Rate  \\ \hline
\noalign{\smallskip}\hline\noalign{\smallskip}
24& 12 & 1/4 & 5.410e-01 &  & 1.205e-01 &  & 5.280e-01 &  & 0.223 \\
48& 24 & 1/8 & 5.206e-02 & 3.38 & 1.237e-02 & 3.28 & 5.213e-02 & 3.34 & 0.238 \\
96& 48 & 1/16 & 1.504e-03 & 5.11 & 1.194e-03 & 3.37 & 1.491e-03 & 5.13 & 0.794 \\
192& 96 & 1/32 & 2.397e-04 & 2.65 & 1.279e-04 & 3.22 & 2.170e-04 & 2.78 & 0.534 \\
384& 192 & 1/64 & 2.018e-05 & 3.57 & 1.502e-05 & 3.09 & 1.484e-05 & 3.87 & 0.744 \\
\noalign{\smallskip}\hline
\end{tabular}
\end{table}

\begin{figure}
    \includegraphics[width = \linewidth]{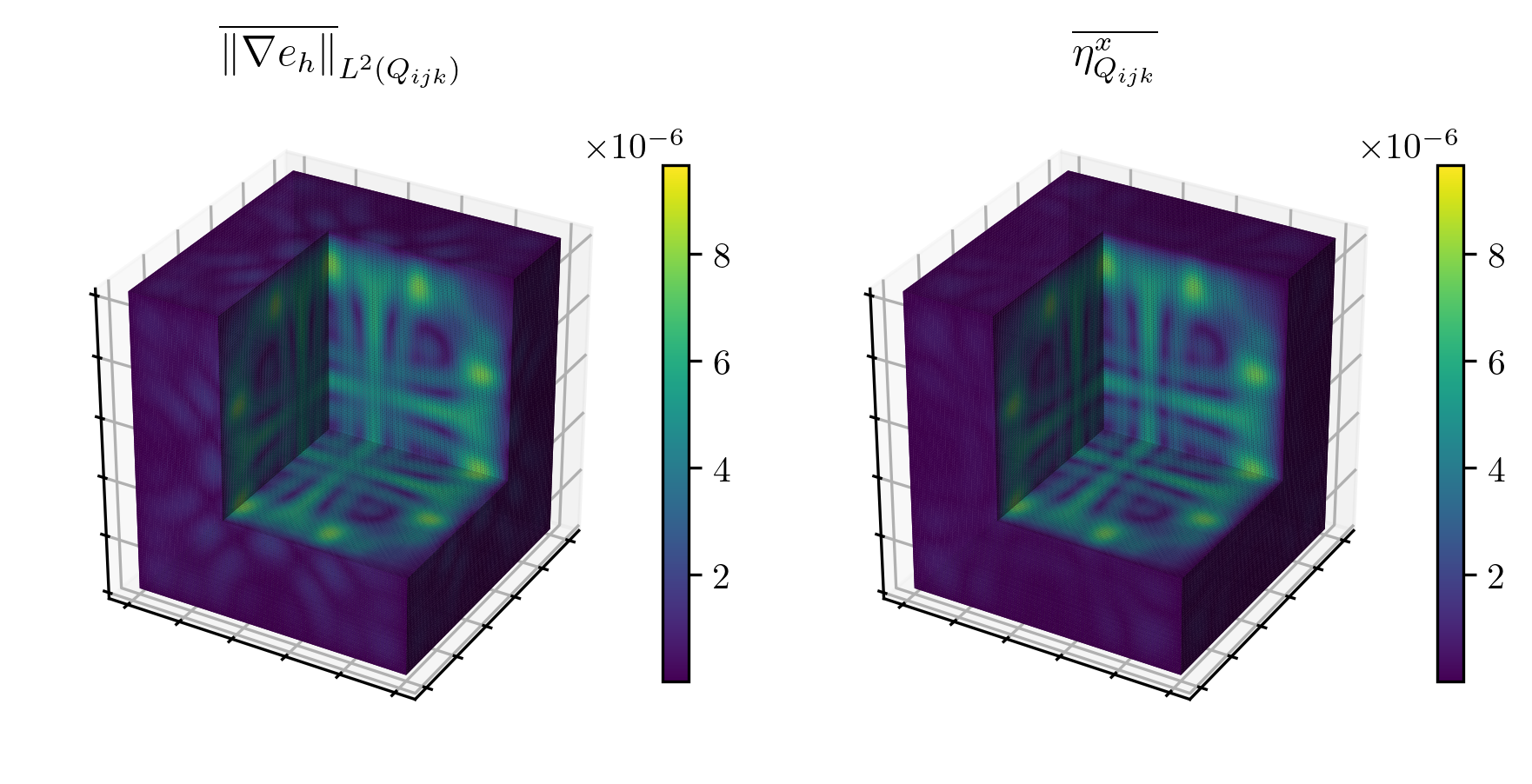}
    \caption{Local results for $r = 1,\;h = \frac{1}{128}$ and the acoustic wave equation in a homogeneous medium. On the left is the local RMS gradient of the error, and on the right is its corresponding local indicator. Additionally, we used the same color scale in both plots.}
    \label{fig:wave_eq_3D:Gaussian:loc_error_k_2_space}
\end{figure}
\begin{figure}
    \includegraphics[width = \linewidth]{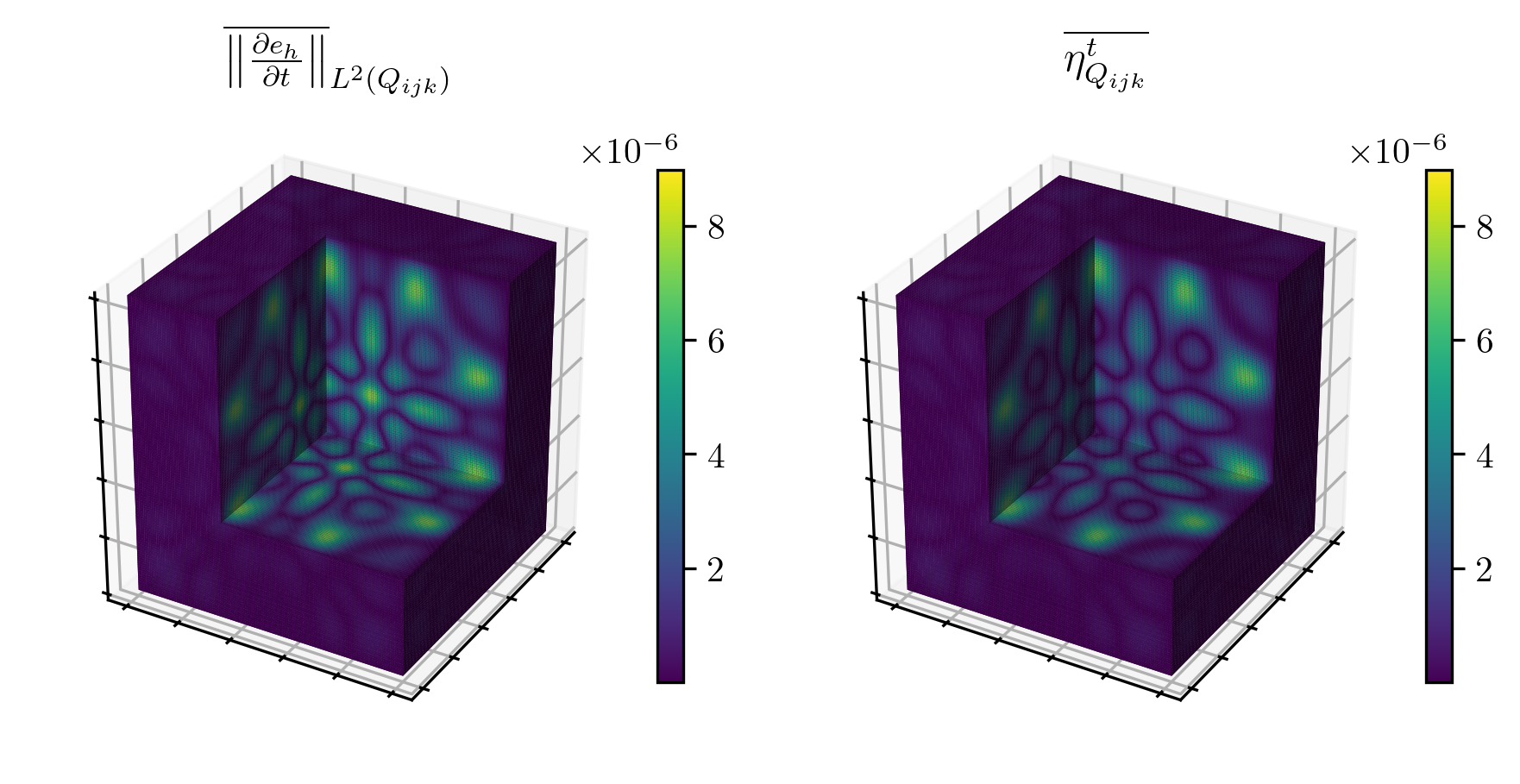}
    \caption{Local results for $r = 1,\;h = \frac{1}{128}$ and the acoustic wave equation in a homogeneous medium. On the left is the local RMS temporal derivative of the error, and on the right is its corresponding local indicator. Additionally, we used the same color scale in both plots.}
    \label{fig:wave_eq_3D:Gaussian:loc_error_k_2_time}
\end{figure}
\begin{figure}
    \includegraphics[width = \linewidth]{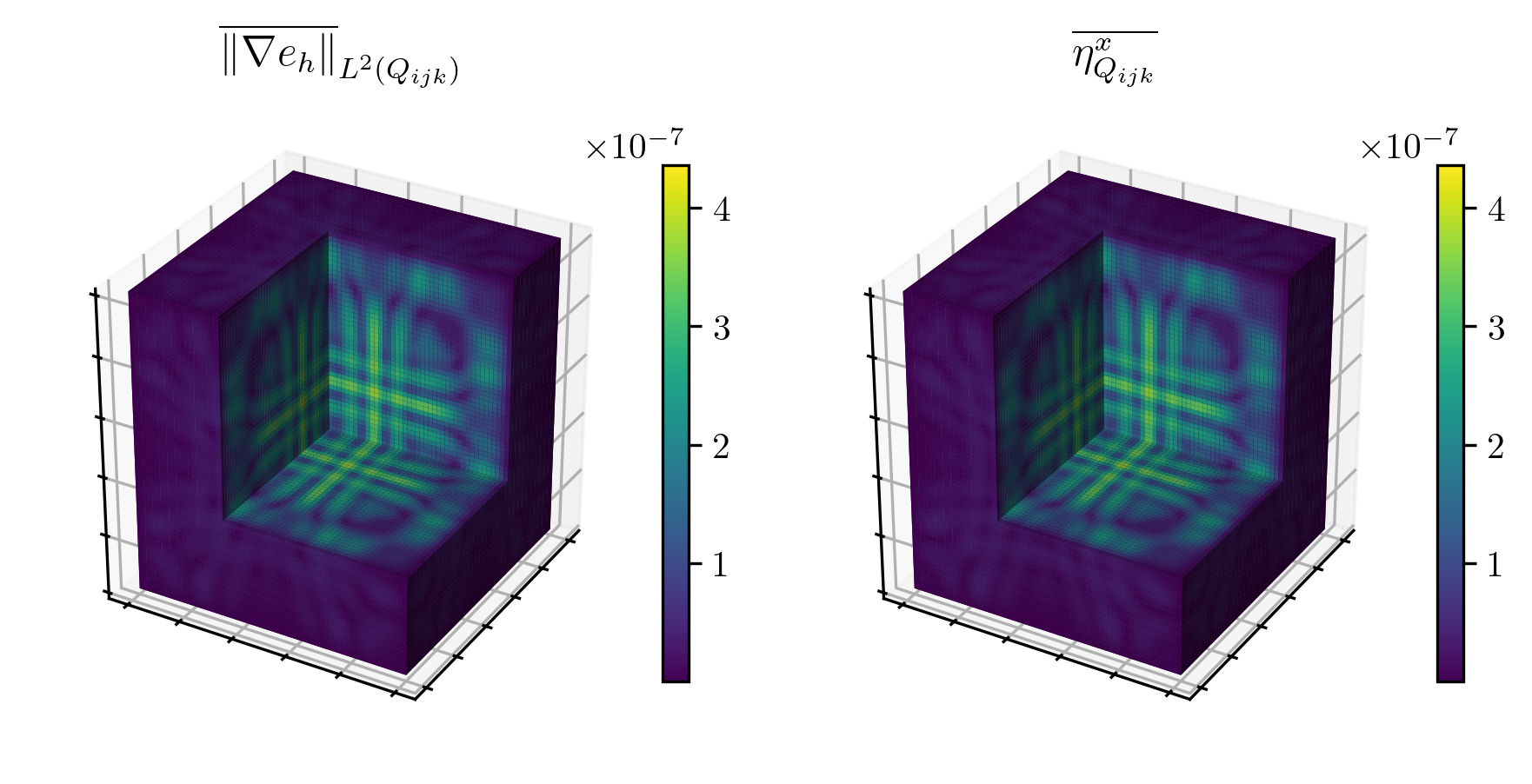}
    \caption{Local results for $r = 3,\;h = \frac{1}{64}$ and the acoustic wave equation in a homogeneous medium. On the left is the local RMS gradient of the error, and on the right is its corresponding local indicator. Additionally, we used the same color scale in both plots.}
    \label{fig:wave_eq_3D:Gaussian:loc_error_k_4_space}
\end{figure}
\begin{figure}
    \includegraphics[width = \linewidth]{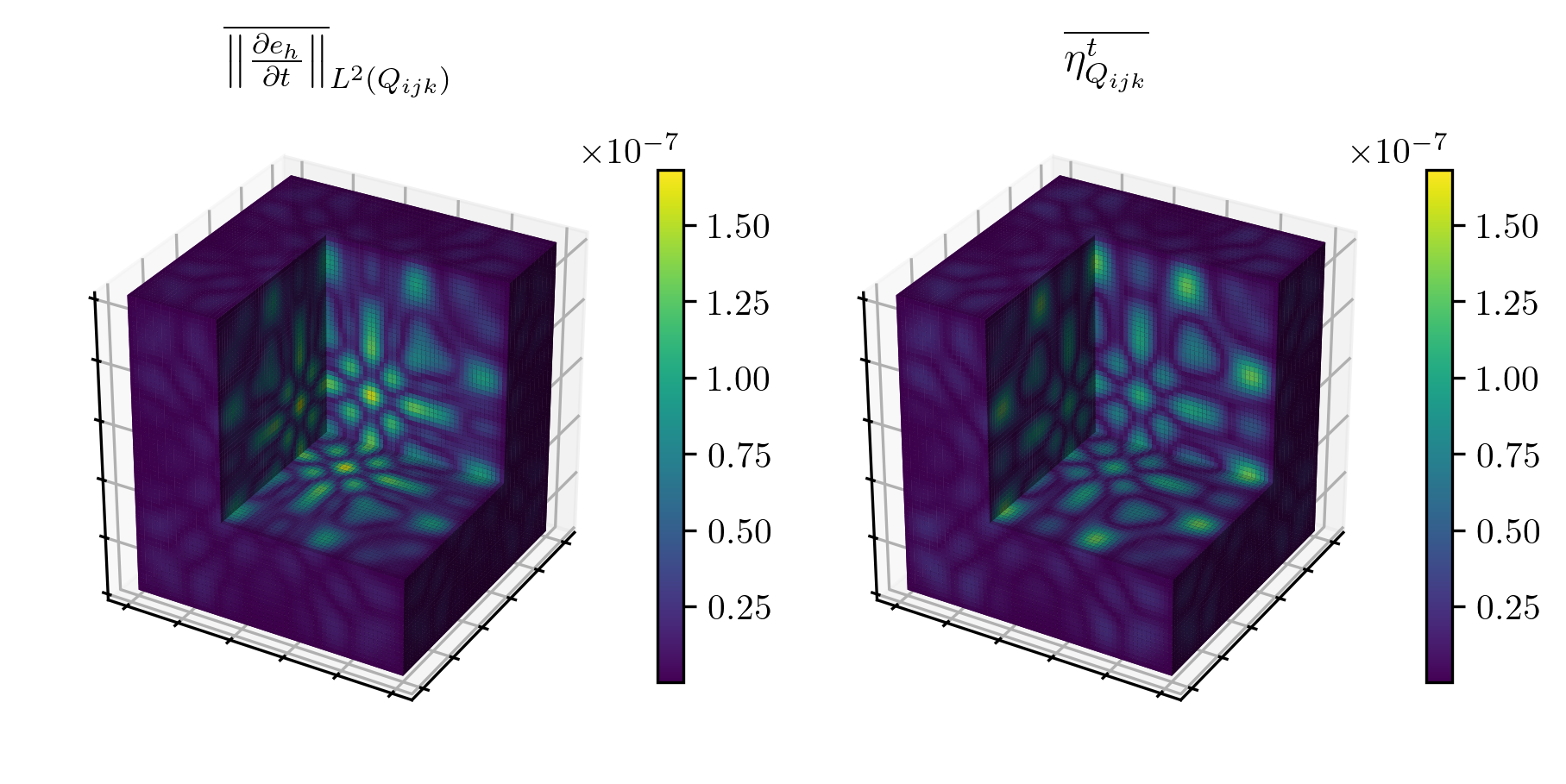}
    \caption{Local results for $r = 3,\;h = \frac{1}{64}$ and the acoustic wave equation in a homogeneous medium. On the left is the local RMS temporal derivative of the error, and on the right is its corresponding local indicator. Additionally, we used the same color scale in both plots.}
    \label{fig:wave_eq_3D:Gaussian:loc_error_k_4_time}
\end{figure}
\section{Conclusions}
\label{sec:conclusions}
We have proposed a novel recovery-based error indicator for the FDM, which could be applied as a black-box tool to a wide class of PDEs. By interpolating the FDM solution on a suitable mesh and using the recovery-based error estimator from Finite Element theory, we derive an error indicator for the gradient of the error and high-order FDM. Numerical experiments for the 2D Poisson problem—including discontinuous coefficients —demonstrate that the indicator is asymptotically exact in the gradient norm, for both second- and fourth-order FDMs. For the wave equation, we tested a 2D case with discontinuous coefficients (second-order FDM) and a 3D case with constant coefficients (second and fourth-order FDM). In these experiments, the indicator was shown to perform reliably in the RMS natural norm (see equation \eqref{eq:natural_norm_wave}). We also compared spatial and temporal error contributions separately, observing that temporal errors are more challenging to capture accurately, which motivates the use of the full RMS natural norm. Future research directions include exploring different interpolation strategies in type and order. 

\section*{Acknowledgments}
The authors would like to thank Paolo Ricci for valuable discussions. This work has been carried out within the framework of the EUROfusion Consortium, via the Euratom Research and Training Programme (Grant Agreement No 101052200 — EUROfusion) and funded by the Swiss State Secretariat for Education, Research and Innovation (SERI). Views and opinions expressed are however those of the author(s) only and do not necessarily reflect those of the European Union, the European Commission, or SERI. Neither the European Union nor the European Commission nor SERI can be held responsible for them.
\bibliographystyle{siamplain}
\bibliography{references}
\end{document}